\newcommand\myurl[1]{\url{#1}}
\numberwithin{equation}{section}
\newtheorem{thm}{}[section]
\newtheorem{theorem}[thm]{Theorem}
\newtheorem{corollary}[thm]{Corollary}
\newtheorem{lemma}[thm]{Lemma}
\theoremstyle{definition}
\newtheorem{example}[thm]{Example}
\newtheorem{question}[thm]{Question}
\newcommand{\abs}[1]{\left\lvert#1\right\rvert}
\newcommand{\norm}[1]{\left\lVert#1\right\rVert}
\newcommand{\enbrace}[1]{\left\lbrace#1\right\rbrace}
\newcommand{\enbrak}[1]{\left[#1\right]}
\newcommand{\enpar}[1]{\left(#1\right)}
\DeclareMathOperator{\Sz}{Sz}
\DeclareMathOperator*{\Dz}{Dz}
\DeclareMathOperator{\supp}{supp}
\DeclareMathOperator{\spn}{span}
\DeclareMathOperator{\codim}{codim}
\newcommand{\LP}{\ensuremath{\bm{\Lambda}}}
\newcommand{\Id}{\ensuremath{\mathrm{Id}}}
\newcommand{\HH}{\ensuremath{\mathbb{H}}}
\newcommand{\VV}{\ensuremath{\mathbb{V}}}
\newcommand{\YY}{\ensuremath{\mathbb{Y}}}
\newcommand{\XX}{\ensuremath{\mathbb{X}}}
\newcommand{\NN}{\ensuremath{\mathbb{N}}}
\newcommand{\ZZ}{\ensuremath{\mathbb{Z}}}
\newcommand{\FF}{\ensuremath{\mathbb{F}}}
\newcommand{\DD}{\ensuremath{\mathbb{D}}}
\newcommand{\BB}{\ensuremath{\mathbb{B}}}
\newcommand{\UU}{\ensuremath{\mathbb{U}}}
\newcommand{\xx}{\ensuremath{\bm{x}}}
\newcommand{\yy}{\ensuremath{\bm{y}}}
\newcommand{\uu}{\ensuremath{\bm{u}}}
\newcommand{\vv}{\ensuremath{\bm{v}}}
\newcommand{\UL}{\ensuremath{\bm{U}}}
\newcommand{\LL}{\ensuremath{\bm{L}}}
\newcommand{\XB}{\ensuremath{\mathcal{X}}}
\newcommand{\Xt}{\ensuremath{\mathfrak{X}}}
\newcommand{\Yt}{\ensuremath{\mathfrak{Y}}}
\newcommand{\Zt}{\ensuremath{\mathfrak{Z}}}
\newcommand{\YB}{\ensuremath{\mathcal{Y}}}
\newcommand{\UB}{\ensuremath{\mathcal{U}}}
\newcommand{\Ut}{\ensuremath{\mathfrak{U}}}
\newcommand{\VB}{\ensuremath{\mathcal{V}}}
\newcommand{\Rt}{\ensuremath{\mathcal{R}}}
\newcommand{\Nt}{\ensuremath{\mathcal{N}}}
\newcommand{\Mt}{\ensuremath{\mathcal{M}}}
\newcommand{\Et}{\ensuremath{\mathcal{E}}}
\newcommand{\Ts}{\ensuremath{\mathcal{T}}}
\newcommand{\Jt}{\ensuremath{\mathcal{J}}}
\newcommand{\Ct}{\ensuremath{\mathcal{C}}}
\newcommand{\Bt}{\ensuremath{\mathcal{B}}}
\author[J. L. Ansorena]{Jos\'e L. Ansorena}\address{Department of Mathematics and Computer Sciences\\
Universidad de La Rioja\\
Logro\~no 26004\\ Spain}
\email{joseluis.ansorena@unirioja.es}
\author[G. Bello]{Glenier Bello}
\address{Departamento de Matem\'{a}ticas e Instituto Universitario de Matem\'{a}ticas y Aplicaciones\\
Universidad de Zaragoza\\
50009 Zaragoza\\
Spain}
\email{gbello@unizar.es}
\subjclass[2020]{46B03, 46B07, 46B10, 46B15, 46B20, 46B25, 46B42, 46B08, 46E30, 46E40}
\keywords{Lebesgue spaces, sequence spaces, isomorphic classification of Banach spaces, linear dimension of Banach spaces}
\begin{document}
% ------------------------------------------------------------------------
\title[]{Embeddability of $\ell_p$-spaces into mixed-norm Lebesgue spaces in connection with the validity of vector-valued extensions of the Riesz--Fischer Theorem}
% ------------------------------------------------------------------------
\begin{abstract}
The aim of this paper is twofold. On the one hand, we compute, in terms of $r$ and $s$, the indices $p$ for which $\ell_p$ isomorphically embeds into the mixed-norm separable spaces $L_s(L_r)$, $\ell_s(L_r)$, $L_s(\ell_r)$ and $\ell_s(\ell_r)$. On the other hand, we use this information to move forward in the isomorphic classification of mixed-norm spaces. In particular, we tell apart the spaces $L_2(L_r)$ and $\ell_2(L_r)$, $r\not=2$.
\end{abstract}
% ------------------------------------------------------------------------
\thanks{Both authors acknowledge the support of the Spanish Ministry for Science and Innovation under Grant PID2022-138342NB-I00 for \emph{Functional Analysis Techniques in Approximation Theory and Applications (TAFPAA)}. G. Bello has also been partially supported by PID2022-137294NB-I00, DGI-FEDER and by Project E48\_23R, D.G. Arag\'{o}n}
% ------------------------------------------------------------------------
\maketitle
% ------------------------------------------------------------------------
\section{Introduction}\noindent
% ------------------------------------------------------------------------
One of the driving forces that made functional analysis move forward is the study of whether established results in spaces of functions have a vector extension for functions with values in a Banach space. Sometimes theorems are valid for every Banach space. For instance, a fundamental result in mathematical analysis is the validity of Lebesgue integration for Banach-valued functions achieved by S.\ Bochner \cite{Bochner1933}. Other times theorems cannot be extended to every Banach space, which naturally leads to consider the class of spaces for which the extension holds. These classes of Banach spaces consist of spaces that behave well in some sense, so Hilbert spaces should be an element of all of them. Possibly, the more important property that arises this way is the \emph{Radon--Nikodym property}. This property, which goes back to the above-mentioned paper \cite{Bochner1933}, defines the Banach spaces $\XX$ such that every $\XX$-valued absolutely continuous measure is a density measure. Another important property is the \emph{unconditional martingale difference} (UMD for short) property introduced by Maurey \cite{Maurey1975} and Pisier \cite{Pisier1975}. We say that $\XX$ is a UMD space if there are $p$, $C\in(1,\infty)$ such that
\[
\norm{ \sum_{n=1}^m \varepsilon_n \enpar{f_n-f_{n-1}} }_{L_p(\XX)}
\le C
\norm{f_m}_{L_p(\XX)}
\]
for every $m\in\NN$, every sequence of signs $(\varepsilon_n)_{n=1}^\infty$ and every martingale $(f_n)_{n=0}^\infty$ with $f_0=0$.

The integration theory constructed by Lebesgue in his 1902 doctoral thesis boosted the development of functional analysis in the early 20th century. A cornerstone result achieved by the pioneers in these early years is the Radon--Nikodym theorem which later led to the definition of the Radon--Nikodym property. The Riesz--Fischer theorem is another one. Although the literature contains many different formulations of this result, all of them are essentially equivalent. A common form of Riesz--Fischer theorem states that the spaces $L_2$ and $\ell_2$ are, via an orthonormal basis of $L_2$, isometrically isomorphic. So, the following question naturally arises when aiming at Banach-valued extensions of classical theorems.

\begin{question}\label{qt:general}
Let $\XX$ be a Banach space. Is $L_2(\XX)$ isomorphic to $\ell_2(\XX)$?
\end{question}

This question was explicitity posed in \cite{Diestel1977}, where the following example that dashes the hope of giving a positive answer in the general case is provided.

\begin{example}[J.\@ Diestel in \cite{Diestel1977}]\label{ex:Diestel}
D.\@ Aldous \cite{Aldous1979} proved that if $L_p(\XX)$ has an unconditional basis, then $1<p<\infty$ and $\XX$ is superreflexive. Therefore, the answer to Question~\ref{qt:general} is negative for any non superreflexive Banach space with an unconditional basis. Instances of such spaces are the Hardy space $H_1(\DD)$ over the unit disc (see \cite{Maurey1980}), the (unique up to isomorphism) Banach space $\UU$ with an unconditional basis that is complementarily universal for separable spaces with an unconditonal basis (see \cite{Pel1969}), the original Tsirelson's space $\Ts^*$ and its dual $\Ts$ (see \cite{Tsirelson1974}), and mixed-norm spaces
\[
Z_{p,q}=\ell_q(\ell_p), \, B_{p,q}=\enpar{\oplus_{n=1}^\infty \ell_p^n}_{\ell_q}, \quad p,q\in[1,\infty],
\]
(we replace $\ell_q$ with $c_0$ if $q=\infty$) in the case when $\{p,q\}\cap\{1,\infty\}$ is nonempty. In particular, the answer to Question~\ref{qt:general} is negative for $c_0$ and $\ell_1$.
\end{example}

We also mention the discussion that took place in the MathOverflow post \cite{Gonzalez2015}. M.\@ Gonz\'{a}lez noted that, since $L_2(L_2)$ and $\ell_2(L_2)$ are lattice isomorphic, the answer is positive for $\XX=L_2(\YY)$, where $\YY$ is an arbitrary Banach space. Let $\BB$ be the (unique up to isomorphism) separable Banach space with the bounded approximation property (BAP, for short) that is complementarily universal for separable spaces with the BAP (see \cites{Pel1969,JRZ1971}). Since $L_2(\BB)$ is a separable Banach space with the BAP, $L_2(\BB)$ is isomorphic to $\BB$ by the Pe{\l}czy\'{n}ski decomposition technique. Hence, as W.\@ Johnson pointed out, the answer to Question~\ref{qt:general} is positive for $\BB$. For spaces for which the answer is negative, let us record the examples given in the above-mentioned MathOverflow post.

\begin{example}[P.\@ Brooker in \cite{Gonzalez2015}]
Through recent years, various ordinal indices have become relevant to studying the geometry of Banach spaces. We consider here the \emph{Szlenk index} $\Sz(\XX)$ and the weak*-dentability index $\Dz(\XX)$ of a Banach space $\XX$. It is known \cites{Brooker2011,Lancien2006} that
\[
\Sz(\ell_2(\XX))=\Sz(\XX)\le \Dz(\XX)\le\Sz(L_2(\XX))
\]
for every Banach space $\XX$.

Since a Banach space $\XX$ is superreflexive if and only if $\Dz(\XX)\le \omega$ \cite{Lancien1995}, the answer to Question~\ref{qt:general} is negative for any non superreflexive Banach space $\XX$ with $\Sz(\XX)=\omega$. Let us record some instances of this kind of spaces.
\begin{itemize}[leftmargin=*]
\item Let $1<p<\infty$ and $\Xt:=(\XX_n)_{n=1}^\infty$ be a sequence of finite dimensional Banach spaces. Set
\[
\XX=\enpar{\oplus_{n=1}^\infty \XX_n}_{\ell_p}.
\]
On the one hand, $\Sz(\XX)=\omega$. On the other hand, $\XX$ fails to be superreflexive as long as either $\ell_1$ or $\ell_\infty$ is finitely representable in $\Xt$.

\item Aside from $c_0$, several preduals of $\ell_1$ constructed by Bourgain and Delbaen \cite{BD1980} have Szlenk index equal to $\omega$ \cite{Alspach2000}.

\item The quasi-reflexive James space $\Jt$ satisfies $\Sz(\Jt)=\omega$ \cite{Lancien1990}.
\end{itemize}

Certain compact Hausdorff spaces $K$ satisfies $\Sz(\Ct(K))<\Dz(\Ct(K))$. Indeed if $\alpha$ is an ordinal number with $\omega^{\omega^n}\le \alpha <\omega^{\omega^{n+1}}$ for some $n\in\NN$, and we set $K=[0,\alpha]$ equipped with the order topology, then $\Sz(\Ct(K))=\omega^{n+1}$ \cite{Samuel1983} and $\Dz(\Ct(K))=\omega^{n+2}$ \cite{HLP2009}.
\end{example}

\begin{example}[M.\@ Ostrovskii in \cite{Gonzalez2015}]
On the one hand, the Banach space $\ell_2(\XX)$ inherits the Banach--Saks property from $\XX$ \cite{Partington1977}. On the other hand, J.\@ Bourgain \cite{Guerre1980} and W.\@ Schachermayer \cite{Schachermayer1981} constructed Banach spaces $\XX$ with the Banach--Saks property such that $L_2(\XX)$ does not have it.
\end{example}

\begin{example}[T.\@ Kania in \cite{Gonzalez2015}]\label{ex:Kania}
Fix $1\le s<\infty$. On the one hand, the Banach space $\ell_s(\XX)$ is a Grothendieck space provided $\XX$ is. On the other hand, if $\XX$ is a non-reflexive Banach space, then $L_s(\XX)$ is not a Grothendieck space \cite{Diaz1995}. Hence, the answer to Question~\ref{qt:general} is negative for non-reflexive Banach spaces with the Grothendieck property. Instances of such spaces are (see \cite{GonzalezKania2021})
\begin{itemize}
\item $\Ct(K)$ for any Stonean compact space $K$,
\item $L_\infty(\mu)$ for any measure space $\mu$,
\item the Hardy space $H_\infty(\DD)$ over the unit disc, and
\item the algebra $\Bt(\HH)$ of bounded operators on a Hilbert space $\HH$.
\end{itemize}
\end{example}

Notice that all above-mentioned counterexamples are non superreflexive Banach spaces. To record a superreflexive Banach space $\XX$ for which the answer to Question~\ref{qt:general} is negative, it will be convenient to introduce a terminology that will be deeply used throughout the paper. Namely, we will use the symbol $ \YY \sqsubseteq\XX$ for standing that the quasi-Banach space $\YY$ isomorphically embeds into the quasi-Banach space $\XX$.

\begin{example}[S.\@ Dilworth in \cite{Dilworth1990}]\label{ex:Dilworth}
Set
\[
M_p=
\begin{cases}
L_p((0,\infty))\cap L_2((0,\infty)) & \mbox{ if } 0<p\le 2, \\ L_p((0,\infty)) + L_2((0,\infty)) & \mbox{ if } p\ge 2.
\end{cases}
\]
On the one hand, $M_p\sqsubseteq L_2(\ell_p)$ (see \cite{Dilworth1990}*{Theorem 3.1}). On the other hand, $M_p\not\sqsubseteq \ell_2(\ell_p)$ unless $p=2$ (see \cite{Dilworth1990}*{Theorem 3.7}). Hence $L_2(\ell_p)$ and $\ell_2(\ell_p)$ fail to be isomorphic for all $p\in(0,\infty)\setminus\{2\}$.
\end{example}

In this note, we contribute to the theory by providing a family of spaces that contains $\ell_p$ and $L_p$, $1<p<\infty$, $p\not=2$, for which the answer to Question~\ref{qt:general} is negative.

The only feasible strategy for proving that two given Banach spaces are not isomorphic seems to be finding a property that distinguishes them. The feature of Banach spaces that we will use to prove that $L_2(L_p)$ and $\ell_2(L_p)$ are not isomorphic is their structure of basic sequences. Specifically, we will study the embeddability of $\ell_p$-spaces into mixed-norm Lebesgue spaces. In this terminology, given a quasi-Banach space $\XX$ over the real or complex field $\FF$ we set
\[
\LP(\XX)=\enbrace{p\in[1,\infty] \colon \ell_p \sqsubseteq \XX},
\]
with the convention that $\ell_\infty$ means here $c_0$. This set is a feature of $\XX$ whose study goes back to Banach's book \cite{Banach1932}, where it is framed within the study of the `linear dimension' of Banach spaces. Let us mention, for instance, that proving that $\LP(\XX)$ can be empty \cite{Tsirelson1974} was one of the milestones of the theory of Banach spaces.

In this terminology, we will prove the following.
\begin{theorem}\label{thm:mainRF}
Let $\XX$ be a Banach space. Suppose that there are $2<p<r<\infty$ such that either $\YY=\XX$ or $\YY=\XX^*$ satisfies
$r\in\LP(\YY)$ and $p\notin\LP(\YY)$. Then $L_2(\XX)$ and $\ell_2(\XX)$ are not isomorphic.
\end{theorem}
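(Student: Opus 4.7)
The plan is to distinguish $L_2(\XX)$ and $\ell_2(\XX)$ by exhibiting an exponent that lies in $\LP$ of one but not of the other. Concretely, the very $p$ in the hypothesis will serve: I will show that $p\in\LP(L_2(\XX))$ and $p\notin\LP(\ell_2(\XX))$ under the hypothesis $\YY=\XX$. The case $\YY=\XX^*$ will be reduced to this one by duality, as explained below.

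For the non-embedding side, I will establish the general structural fact that $\LP(\ell_2(\XX))\subseteq\{2\}\cup\LP(\XX)$ via a standard gliding-hump argument on the natural unconditional $\ell_2$-sum decomposition of $\ell_2(\XX)$. Given a normalized basic sequence equivalent to the $\ell_p$-basis in $\ell_2(\XX)$, one passes to a subsequence that is either essentially concentrated in a single $\XX$-coordinate (thereby realizing $\ell_p\sqsubseteq\XX$) or an honest block sequence relative to the $\ell_2$-decomposition (which, by unconditionality, forces $p=2$). Under the hypotheses $p\neq 2$ and $p\notin\LP(\XX)$, both conclusions fail, so no $\ell_p$-embedding exists.

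For the embedding side, the assumption $r\in\LP(\XX)$ yields $L_2(\ell_r)\sqsubseteq L_2(\XX)$ by the functoriality of the Bochner construction. Example~\ref{ex:Dilworth} applied at the exponent $r\geq 2$ furnishes the embedding $M_r=L_r((0,\infty))+L_2((0,\infty))\sqsubseteq L_2(\ell_r)$. The crux is then to exhibit $\ell_p\sqsubseteq M_r$ for every $p\in(2,r)$; this is carried out by constructing a basic sequence of carefully scaled functions on $(0,\infty)$ whose $L_r$- and $L_2$-norms balance along an interpolation scale so that the infimum defining the $M_r$-norm produces the $\ell_p$-norm on finite linear combinations.

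The dual case, where $r\in\LP(\XX^*)$ and $p\notin\LP(\XX^*)$, reduces to the primal one: if $L_2(\XX)\cong\ell_2(\XX)$, then $L_2(\XX)^*\cong\ell_2(\XX)^*=\ell_2(\XX^*)$; combined with the canonical isometric inclusion $L_2(\XX^*)\hookrightarrow L_2(\XX)^*$, this would give $p\in\LP(L_2(\XX^*))\subseteq\LP(L_2(\XX)^*)=\LP(\ell_2(\XX^*))$, contradicting $p\notin\LP(\ell_2(\XX^*))$ obtained from the gliding-hump step applied to $\XX^*$. The main technical hurdle is the embedding $\ell_p\sqsubseteq M_r$: the most natural candidates—scaled disjoint indicators—only reproduce $\ell_r$ or $\ell_2$, so realizing an intermediate $\ell_p$-basis requires a delicate interpolation between the $L_r$- and $L_2$-regimes. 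With that in place, the theorem follows at once: $p\in\LP(L_2(\XX))\setminus\LP(\ell_2(\XX))$, so the two spaces cannot be isomorphic.
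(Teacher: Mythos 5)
Your overall architecture coincides with the paper's: assume $L_2(\XX)\simeq\ell_2(\XX)$, reduce the dual case by the chain $L_2(\XX^*)\hookrightarrow (L_2(\XX))^*\simeq(\ell_2(\XX))^*=\ell_2(\XX^*)$, produce $\ell_p\sqsubseteq L_2(\ell_r)\sqsubseteq L_2(\YY)$ from $r\in\LP(\YY)$, and contradict $p\notin\LP(\YY)$ via the dichotomy $\LP(\ell_2(\YY))\subseteq\{2\}\cup\LP(\YY)$ (your gliding-hump step is exactly the content of Theorem~\ref{thm:CM} combined with Theorem~\ref{thm:PelSti}). The genuine divergence is in how you obtain $\ell_p\sqsubseteq L_2(\ell_r)$ for $2<p<r$. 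The paper proves this (Theorem~\ref{lem:lpEmbeds}) by convexification: pick $t$ with $tr=2$, embed $\ell_{tp}$ into $L_{2t}$ via Theorem~\ref{thm:AMS} (stable variables in the quasi-Banach range), disjointify with the square-function theorem (Theorem~\ref{lem:AANew}), and re-convexify with Lemma~\ref{lem:DisjSup}. You instead route through Dilworth's space $M_r=L_r+L_2$, using $M_r\sqsubseteq L_2(\ell_r)$ from Example~\ref{ex:Dilworth} and then $\ell_p\sqsubseteq M_r$. Your route has the virtue of staying inside classical Banach-space territory and of making the connection with \cite{Dilworth1990} explicit; the paper's route is self-contained and, because it passes through quasi-Banach convexification, generalizes beyond $s=2$ (which is needed elsewhere in Section~\ref{sect:lpembeds}).

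The one step you do not actually carry out is the crux you yourself flag: $\ell_p\sqsubseteq M_r$ for $2<p<r$. As written ("carefully scaled functions \dots balance along an interpolation scale") this is a promissory note, not a proof. The claim is true, and the construction is the expected one: take disjoint translates $f_n$ of $\varphi(t)=t^{-1/p}\chi_{(0,1)}(t)$, which lies in $L_2+L_r$ since $2/p<1$. Using the Holmstedt-type equivalence $\norm{f}_{L_2+L_r}\approx\bigl(\int_0^1(f^*)^2\bigr)^{1/2}+\bigl(\int_1^\infty(f^*)^r\bigr)^{1/r}$ one computes $\norm{\sum_{n\le N}f_n}\approx N^{1/p}\bigl(\int_0^1 t^{-2/p}\,dt\bigr)^{1/2}+N^{1/p}\bigl(\int_1^N t^{-r/p}\,dt\bigr)^{1/r}\approx N^{1/p}$, both integrals converging precisely because $2<p<r$; the general-coefficient bounds follow from the distribution-function identity $d_{\sum a_nf_n}(\lambda)=\sum_n\min\bigl(1,(\abs{a_n}/\lambda)^p\bigr)$. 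You should either supply this computation or cite it from \cite{Dilworth1990}, where it underlies the proof that $M_p\not\sqsubseteq\ell_2(\ell_p)$. With that step filled in, your argument is complete and correct.
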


The structure of the paper is as follows. Section~\ref{sect:main} is geared towards proving Theorem~\ref{thm:mainRF}. The subsequent Section~\ref{sect:lpembeds} is devoted solely to studying $\LP(\XX)$ when $\XX$ is a mixed-norm space. We close the paper with Section~\ref{sect:iso}, where applications to the isomorphic theory of Banach spaces are given.

We close this introductory section by setting some terminology that will be heavily used. We denote by $S_\XX$ the unit sphere of a quasi-Banach space $\XX$. Given a family $\XB=(\xx_n)_{n\in\Nt}$ in $\XX$, $[\XB]=[\xx_n \colon n\in\Nt]$ stands for its closed linear span. If $[\XB]=\XX$ we say that $\XB$ is \emph{complete} within $\XX$. If
\[
\inf_ {n\in\Nt} \norm{\xx_n}>0, \quad \sup_ {n\in\Nt} \norm{\xx_n}<\infty,
\]
we say that $\XB$ is \emph{semi-normalized}. Given another family $\YB=(\yy_n)_{n\in\Nt}$ in a quasi-Banach space $\YY$, we say that $\XB$ and $\YB$ are \emph{equivalent} if there is an isomorphic embedding $T\colon[\XB]\to \YY$ such that $T(\xx_n)=\yy_{n}$ for all $n\in\Nt$. The symbol $\XX\simeq\YY$ will mean that the quasi-Banach spaces $\XX$ and $\YY$ are isomorphic.

Given two elements $n$ and $k$, we denote by $\delta_{n,k}$ its Kronecker delta given by $\delta_{n,k}=1$ if $n=k$ and $\delta_{n,k}=0$ otherwise. Given family $\XB=(\xx_n)_{n\in\Nt}$ in a quasi-Banach space $\XX$, we say that $\XB^*=(\xx_n^*)_{n\in\Nt}$ in $\XX^*$ is family of \emph{coordinate functionals} for $\XB$ within $\XX$ if $\xx_n^*(\xx_k)=\delta_{n,k}$ for all $n$, $k\in\Nt$. A \emph{minimal system} of $\XX$ is a family in $\XX$ for which there exists a family of coordinate functionals within $\XX$.

A quasi-Banach lattice is \emph{L-concave} if it has some nontrivial lattice concavity. It is known \cite{Kalton1984b} that any L-concave quasi-Banach lattice has some nontrivial lattice convexity. For convenience, we shall only deal with quasi-Banach lattices arising from function quasi-norms over measure spaces. We refer the reader to \cite{AnsorenaBello2022} for the basics of this generalization of function spaces and function norms. We say that a quasi-Banach function space $\LL$ is \emph{absolutely continuous} if it is built from an absolutely continuous function quasi-norm. If $\LL$ is L-concave, then it is absolutely continuous.

The modulus of concavity of a quasi-Banach space $\XX$ is the smallest constant $\kappa=\kappa(\XX)$ such that
\[
\norm{f+g} \le\kappa \enpar{ \norm{f}+\norm{g}}, \quad f,g\in\XX.
\]
Let $\LL$ be a quasi-Banach function space over a countable set $\Nt$ and $\Xt=(\XX_n)_{n\in\Nt}$ be a sequence of quasi-Banach spaces. If $\sup_{n\in\Nt}\kappa(\XX_n)<\infty$, then
\[
\LL(\Xt)=\enpar{\oplus_{n=1}^\infty \XX_n}_{\LL}
\]
is a quasi-Banach space. If there is a quasi-Banach space $\XX$ such that $\XX_n=\XX$ for all $n\in\Nt$, we put $\LL(\Xt)=\LL(\XX)$.

Given a measure space $(\Omega,\Sigma,\mu)$ and a quasi-Banach space $\XX$, the support of a measurable function $f\colon \Omega\to \XX$ will be the set
\[
\enbrace{\omega\in\Omega \colon f(\omega)\in\XX\setminus\{0\} }.
\]
In this regard, we emphasize that given quasi-Banach function spaces $\LL_i$ over a measures spaces $(\Omega_i,\Sigma_i,\mu_i)$, $i=1$, $2$, we regard $\LL_1(\LL_2)$ as a quasi-Banach function space over the product measure space $(\Omega_1\times\Omega_2, \Sigma_1\otimes\Sigma_2, \mu_1\otimes\mu_2)$. So, the support of $f\in \LL_1(\LL_2)$ will be a measurable subset of $\Omega_1\times\Omega_2$. If $\LL_1$ and $\LL_2$ are L-concave, so is $\LL_1(\LL_2)$.

Given $0<p<\infty$ and a measure space $(\Omega,\Sigma,\mu)$, the Lebesgue space $L_p(\mu)$ is $p$-convex and $p$-concave. In particular, it is absolutely continuous.

We say that a quasi-Banach space $\XX$ is \emph{$C$-complemented} in a quasi-Banach space $\YY$ if there are linear operators $J\colon\XX\to\YY$ and $P\colon\YY\to \XX$ such that $P\circ J=\Id_{\XX}$ and $\norm{J}\norm{P} \le C$. Given countable infinite families of quasi-Banach spaces $\Xt=(\XX_n)_{n\in\Nt}$ and $\Yt=(\YY_n)_{m\in\Mt}$, we say that \emph{$\Xt$ is complemented in $\YB$}, and we put $\Xt \trianglelefteq \Yt$, if there is a constant $C\in[1,\infty)$ such that for all $n\in\Nt$ and all $F\subseteq\Mt$ finite there is $m\in\Mt\setminus J$ such that $\XX_n$ is $C$-complemented in $\YY_m$. If there is a Banach space $\UU$ such that $\XX_n=\UU$ for all $n\in\Nt$ (resp., $\YY_m=\UU$ for all $m\in\Mt$) we replace $\Xt$ (resp., $\Yt$) with $\UU$ in the symbol $\Xt \trianglelefteq \Yt$.

We denote by $\NN_0$ the set of all nonnegative integers, that is, $\NN_0=\NN\cup\{0\}$.
% ------------------------------------------------------------------------
\section{The \texorpdfstring{$L_p$}{}-valued Riesz--Fischer theorem does not hold}\label{sect:main}\noindent
% ------------------------------------------------------------------------
For exponencial ease, we shall record several known results. The first of them is an extension to the quasi-Banach setting of a classical result concerning unconditional basic sequences in Banach lattices (see \cite{Maurey1974}*{Lemme 5 and Lemme 6} or \cite{LinTza1979}*{Theorem 1.d.6}).

\begin{theorem}[see \cite{AlbiacAnsorena2025}*{Lemma 2.5}]\label{thm:AA}
Let $(\xx_n)_{n\in\Nt}$ be an unconditional basic sequence in an L-concave quasi-Banach function space $\LL$. Then, there is a constant $C$ such that
\[
\frac{1}{C} \norm{f}_{\LL} \le \norm{\enpar{\sum_{n\in\Nt} \abs{a_n}^2 \abs{x_n}^2}^{1/2}}_{\LL} \le C \norm{f}_{\LL}
\]
for all $f=\sum_{n\in\Nt} a_n\, \xx_n\in\LL$.
\end{theorem}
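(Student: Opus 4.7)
The plan is to transport the classical Maurey square-function theorem for unconditional basic sequences in Banach lattices into the quasi-Banach setting. The key enabler is the theorem of Kalton quoted just above the statement: L-concavity automatically furnishes nontrivial lattice convexity, so we may fix $0<r\le s<\infty$ such that (after passing to an equivalent function quasi-norm) $\LL$ is both $r$-convex and $s$-concave.

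First, I would exploit unconditionality via randomization. Let $K$ denote the unconditional constant of $(\xx_n)_{n\in\Nt}$ and $(r_n)$ the Rademacher system on $[0,1]$. For every finitely supported scalar sequence $(a_n)$ and every choice of signs $(\varepsilon_n)$,
\[
\norm{\sum_n \varepsilon_n a_n \xx_n}_\LL \le K\norm{\sum_n a_n \xx_n}_\LL.
\]
Raising to the $q$-th power, integrating over the signs, and invoking Kahane's inequality in the quasi-Banach space $\LL$ (which holds with constants depending only on $q$ and on $\kappa(\LL)$) yields, for any fixed $0<q<\infty$,
\[
\norm{\sum_n a_n \xx_n}_\LL \approx \enpar{\int_0^1 \norm{\sum_n r_n(t) a_n \xx_n}_\LL^q dt}^{1/q}.
\]

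Second, I would apply the square-function estimate inside $\LL$. Using $r$-convexity with $q=r$, one bounds
\[
\int_0^1 \norm{\sum_n r_n(t) a_n \xx_n}_\LL^r dt \lesssim \norm{\enpar{\int_0^1 \abs{\sum_n r_n(t) a_n x_n}^r dt}^{1/r}}_\LL^r,
\]
while $s$-concavity furnishes the converse inequality at the $s$-power level. A pointwise application of Khintchine's inequality then converts the inner $L_r$-average of Rademacher sums into the square function:
\[
\enpar{\int_0^1 \abs{\sum_n r_n(t) a_n x_n(\omega)}^r dt}^{1/r} \approx \enpar{\sum_n \abs{a_n}^2 \abs{x_n(\omega)}^2}^{1/2}.
\]
Combining the two steps yields the announced two-sided bound with constant depending only on $K$, $r$, $s$, and $\kappa(\LL)$.

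The principal obstacle is Step 2: the interchange of the lattice quasi-norm with the Rademacher integral, together with the pointwise Khintchine estimate, requires genuine convexity and concavity of $\LL$, neither of which is guaranteed by the mere quasi-triangle inequality. It is precisely here that Kalton's theorem saying L-concavity entails nontrivial lattice convexity becomes indispensable; once both $r$-convexity and $s$-concavity are in hand, the remainder is essentially Maurey's classical argument transferred to the $r$-convex, $s$-concave setting via the standard convexification procedure (passage to the Banach lattice $\LL^{(r)}$) or via Krivine's functional calculus.
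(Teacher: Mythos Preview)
The paper does not supply its own proof of this theorem: it is quoted as a known result from \cite{AlbiacAnsorena2025}*{Lemma 2.5}, with the Banach-lattice antecedent attributed to Maurey and to \cite{LinTza1979}*{Theorem 1.d.6}. Your sketch is precisely the classical Maurey argument (unconditionality plus randomization, then Khintchine pointwise, with lattice convexity/concavity to interchange the integral and the quasi-norm), and you correctly single out Kalton's theorem---L-concavity implies nontrivial lattice convexity---as the device that makes the transfer to the quasi-Banach setting possible. So in spirit your proposal matches what the paper invokes.

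One concrete slip to fix: you have the roles of $r$-convexity and $s$-concavity reversed. For an $r$-convex lattice one has
\[
\norm{\enpar{\int_0^1 \abs{g_t}^r\,dt}^{1/r}}_{\LL}\;\lesssim\;\enpar{\int_0^1 \norm{g_t}_{\LL}^r\,dt}^{1/r},
\]
not the inequality you wrote; it is $s$-concavity that gives
\[
\enpar{\int_0^1 \norm{g_t}_{\LL}^s\,dt}^{1/s}\;\lesssim\;\norm{\enpar{\int_0^1 \abs{g_t}^s\,dt}^{1/s}}_{\LL}.
\]
Thus concavity (at exponent $s$) yields the upper bound $\norm{f}_{\LL}\lesssim\norm{(\sum|a_n|^2|x_n|^2)^{1/2}}_{\LL}$ after Khintchine, while convexity (at exponent $r$) yields the lower bound. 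Once the labels are swapped, the argument goes through exactly as you outline.
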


Theorem~\ref{thm:AA} allows us to regard unconditional basic sequences in quasi-Banach lattices as families in Hilbert-valued lattices. This gaze yields the following result.

\begin{theorem}\label{lem:AANew}
Let $\XB=(\xx_n)_{n\in\Nt}$ be an unconditional basic sequence in an L-concave quasi-Banach function space $\LL$. Then, $\XB$ is equivalent to a disjointly supported sequence in $\LL(\ell_2(\Nt))$.
\end{theorem}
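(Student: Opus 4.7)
The plan is to upgrade the square-function estimate of Theorem~\ref{thm:AA} into an explicit ``disjointification'' by tensoring each $\xx_n$ with the $n$-th unit vector of $\ell_2(\Nt)$. Concretely, if $\LL$ is built over $(\Omega,\Sigma,\mu)$, then $\LL(\ell_2(\Nt))$ is naturally a quasi-Banach function space over $\Omega\times\Nt$, and I would define
\[
\yy_n(\omega,k) = \xx_n(\omega)\,\delta_{n,k}, \qquad n,k\in\Nt,\ \omega\in\Omega.
\]
Then $\supp(\yy_n)\subseteq \Omega\times\{n\}$, so the sequence $(\yy_n)_{n\in\Nt}$ is automatically disjointly supported in $\LL(\ell_2(\Nt))$.

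Next I would check equivalence. By the very definition of $\LL(\ell_2(\Nt))$, for any scalars $(a_n)_{n\in\Nt}$ (finitely supported, say) one has
\[
\norm{\sum_{n\in\Nt} a_n\,\yy_n}_{\LL(\ell_2(\Nt))}
=\norm{\omega\mapsto\enpar{\sum_{n\in\Nt}\abs{a_n}^2\abs{\xx_n(\omega)}^2}^{1/2}}_{\LL}.
\]
Theorem~\ref{thm:AA}, applied to the unconditional basic sequence $\XB$ in the L-concave lattice $\LL$, asserts that the right-hand side is equivalent (with constants independent of the $a_n$'s) to $\norm{\sum_{n\in\Nt} a_n\,\xx_n}_{\LL}$. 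Thus the map $\xx_n\mapsto \yy_n$ extends to an isomorphic embedding $[\XB]\to \LL(\ell_2(\Nt))$, which is precisely the definition of equivalence of basic sequences.

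I do not foresee a real obstacle here: the statement is essentially a reformulation of Theorem~\ref{thm:AA}, and the only creative step is recognizing that tensoring with the canonical basis of $\ell_2(\Nt)$ converts the square function appearing in that theorem into the norm of a disjointly supported family in $\LL(\ell_2(\Nt))$. Minor care should be taken to confirm that $\LL(\ell_2(\Nt))$ is indeed a quasi-Banach space (which follows from $\kappa(\ell_2(\Nt))=1$, so the hypothesis $\sup_n \kappa(\XX_n)<\infty$ from the preliminaries holds) and that the disjointness of supports is understood in the product measure space $\Omega\times\Nt$, as stipulated in the discussion preceding the theorem.
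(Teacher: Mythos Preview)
Your proposal is correct and follows essentially the same approach as the paper: the paper also defines $\yy_n(\omega,k)=\xx_n(\omega)\,\delta_{n,k}$, notes that $\supp(\yy_n)\subseteq\Omega\times\{n\}$, computes the same norm identity, and invokes Theorem~\ref{thm:AA}. Your additional remarks about $\LL(\ell_2(\Nt))$ being a quasi-Banach space and the meaning of disjoint supports in the product space are accurate and consistent with the paper's conventions.
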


\begin{proof}
Suppose that $\LL$ is a quasi-Banach function space over a measure space $(\Omega,\Sigma,\mu)$. If we define for each $n\in\Nt$ $\yy_n\colon\Omega\times\Nt \to\FF$ by
\[
\yy_n(\omega,k)=\xx_n(\omega) \delta_{n,k}, \quad \omega\in\Omega, \, k\in\Nt,
\]
then $\supp(\yy_n) \subset A_n:=\Omega\times \{n\}$. It is clear that $(A_n)_{n=1}^\infty$ is a partition of $\Omega\times\Nt$, and
\[
\norm{\sum_{n\in\Nt} a_n\, \yy_n}_{\LL(\ell_2(\Nt))} = \norm{\enpar{\sum_{n\in\Nt} \abs{a_n}^2 \abs{\xx_n}^2}^{1/2}}_{\LL},
\quad (a_n)_{n\in\Nt}\in c_{00}(\Nt).
\]
So, the result follows from Theorem~\ref{thm:AA}.
\end{proof}

One of the techniques we will use is the small perturbation principle. We next record and prove the precise statement of this principle we will apply.

\begin{lemma}\label{lem:SPP}
Let $\XB=(\xx_n)_{n\in\Nt} $ and $\YB=(\yy_n)_{n\in\Nt}$ be families in a $q$-Banach space $\XX$, $0<q\le 1$. Suppose that $\YB$ is a minimal system of $\YY=[\yy_n \colon n\in\Nt]$ with coordinate functionals $(\yy_n^*)_{n\in\Nt}$ (within $\YY$). If
\[
\sum_{n\in\Nt} \norm{\yy_n^*}^q \norm{\xx_n-\yy_n}^q<1,
\]
then $\XB$ is equivalent to $\YB$.
\end{lemma}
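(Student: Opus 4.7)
The plan is to realize the equivalence via a small perturbation of the inclusion $\YY \hookrightarrow \XX$. Set
\[
c := \sum_{n \in \Nt} \norm{\yy_n^*}^q \norm{\xx_n - \yy_n}^q < 1.
\]
First I would define $S \colon \YY \to \XX$ by
\[
S(y) = \sum_{n \in \Nt} \yy_n^*(y)\,(\xx_n - \yy_n),
\]
and check that this series converges unconditionally in the $q$-Banach space $\XX$: since $\norm{\cdot}^q$ is subadditive,
\[
\sum_{n \in \Nt} \norm{\yy_n^*(y)\,(\xx_n - \yy_n)}^q \le \norm{y}^q \sum_{n \in \Nt} \norm{\yy_n^*}^q \norm{\xx_n - \yy_n}^q = c \norm{y}^q,
\]
which yields both convergence and the bound $\norm{S(y)}^q \le c \norm{y}^q$. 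Then I would set $T = \Id_{\YY} + S \colon \YY \to \XX$; by the biorthogonal relations $\yy_n^*(\yy_k) = \delta_{n,k}$, we have $S(\yy_n) = \xx_n - \yy_n$, and hence $T(\yy_n) = \xx_n$ for every $n \in \Nt$.

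The second step is to sandwich $T$ from above and below on $\YY$. The $q$-subadditivity of $\norm{\cdot}^q$ gives the upper bound $\norm{T(y)}^q \le \norm{y}^q + \norm{S(y)}^q \le (1+c)\norm{y}^q$. For the lower bound, I would write $y = T(y) - S(y)$ and apply $q$-subadditivity the other way to obtain
\[
\norm{y}^q \le \norm{T(y)}^q + \norm{S(y)}^q \le \norm{T(y)}^q + c \norm{y}^q,
\]
so $\norm{T(y)}^q \ge (1-c)\norm{y}^q$. Thus $T \colon \YY \to \XX$ is an isomorphic embedding carrying $\yy_n$ to $\xx_n$, which is exactly the definition of equivalence of the families $\YB$ and $\XB$.

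The argument is a standard Neumann-style perturbation, and no serious obstacle arises: the only subtlety is that we are in a $q$-Banach space, where both the criterion for absolute convergence and the reverse triangle inequality must be phrased in terms of $\norm{\cdot}^q$ rather than $\norm{\cdot}$. The hypothesis $\sum_{n} \norm{\yy_n^*}^q \norm{\xx_n-\yy_n}^q < 1$ is tailored precisely to this setting, so the estimates slot in directly.
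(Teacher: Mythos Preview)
Your proof is correct and is essentially the same argument as the paper's: define the perturbation operator $S(y)=\sum_n \yy_n^*(y)(\xx_n-\yy_n)$, use $q$-subadditivity to get $\norm{S(y)}^q\le c\norm{y}^q$ with $c<1$, and conclude that $T=J+S$ (with $J$ the inclusion $\YY\hookrightarrow\XX$) is an isomorphic embedding sending $\yy_n$ to $\xx_n$. The paper writes $T=J-S$, which appears to be a sign slip; your version has the correct sign.
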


\begin{proof}
The linear operator $S\colon \YY \to \XX$ given by
\[
S(f)=\sum_{n=1}^\infty \yy_n^*(f) \enpar{\xx_n-\yy_n}, \quad f\in\YY,
\]
is well-defined, and satisfies $\norm{S}<1$. Let $J$ be be inclusion of $\YY$ into $\XX$. Set $T=J-S$. Since $T(\yy_n)=\xx_n$ for all $n\in\Nt$ and
\[
\enpar{1-\norm{S}^q} \norm{f}^q \le \norm{T(f)}^q \le \enpar{1+\norm{S}^q} \norm{f}^q, \quad f\in\YY,
\]
we are done.
\end{proof}

Banach \cite{Banach1932} conjectured that $L_p$ isometrically embeds into $L_r$ for any $1\le r \le p \le 2$. This question was solved in the positive in \cite{BDCK1966} (see also \cite{LinPel1968}). We will use the extension to the quasi-Banach setting of this embedding.

\begin{theorem}[\cite{Kanter1973}]\label{thm:AMS}
Let $0<r\le 2$. Then $L_p$ isometrically embeds into $L_r$ for all $p\in[r,2]$.
\end{theorem}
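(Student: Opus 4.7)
The plan is to prove this via the classical construction using symmetric $p$-stable random variables. For $p=r$ the statement is trivial, so I would focus on $r<p\le 2$. Recall that for $0<p\le 2$ there exists a symmetric $p$-stable distribution, characterized by the characteristic function $E(e^{itX})=e^{-\abs{t}^p}$; for $p=2$ this is Gaussian. The moment identity I would rely on asserts that for every $0<r<p$ the constant $\kappa_{r,p}:=E(\abs{X}^r)$ is finite and positive, and consequently any variable $Y$ with law $\sigma X$ satisfies $\norm{Y}_{L_r}=\kappa_{r,p}^{1/r}\,\sigma$.

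First, I would fix a measure space $(\Omega,\Sigma,\mu)$ underlying $L_p=L_p(\mu)$ and construct on some auxiliary probability space $(\Omega',\Sigma',P)$ a symmetric $p$-stable random measure $W$ indexed by the sets of finite $\mu$-measure, i.e., a family $\{W(A) : \mu(A)<\infty\}$ such that (i) $W(A)$ is symmetric $p$-stable with scale $\mu(A)^{1/p}$, and (ii) $W(A_1),\dots,W(A_n)$ are independent whenever $A_1,\dots,A_n$ are pairwise disjoint. Existence follows from Kolmogorov's consistency theorem combined with the elementary fact that independent symmetric $p$-stable variables with scales $\sigma_i$ sum to a symmetric $p$-stable variable with scale $(\sum_i \sigma_i^p)^{1/p}$, immediate from multiplying characteristic functions. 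Extending by linearity I would define $T$ on simple functions via $T(\sum_i a_i \chi_{A_i})=\sum_i a_i W(A_i)$ (for disjointly supported $A_i$), and the scaling and independence properties ensure that $T(f)$ is symmetric $p$-stable with scale $\norm{f}_{L_p}$.

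The second step is to show that $T$ extends to an isometric (up to a constant) embedding $L_p(\mu)\to L_r(P)$. The moment identity yields $\norm{T(f)}_{L_r}=\kappa_{r,p}^{1/r}\norm{f}_{L_p}$ on simple functions; by density this will extend to all of $L_p(\mu)$, and rescaling $T$ by $\kappa_{r,p}^{-1/r}$ produces the desired isometry. The endpoint case $p=2$ is subsumed in this scheme by taking $W$ to be Gaussian white noise, where the moment identity is the elementary formula $E(\abs{Y}^r)=c_r\sigma^r$ for a centered Gaussian $Y$ with variance $\sigma^2$.

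The main obstacle is probabilistic rather than Banach-space theoretic: verifying the existence of the $p$-stable random measure and establishing the finiteness of $r$-th moments for $r<p$. The latter is standard and follows from the tail estimate $P(\abs{X}>t)\sim c_p\, t^{-p}$ as $t\to\infty$, itself derivable from the characteristic function by a Tauberian argument. These ingredients are classical and are developed in detail in the cited paper of Kanter; once they are in hand, the construction above assembles into the embedding without further difficulty.
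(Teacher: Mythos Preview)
The paper does not supply its own proof of this statement; it is quoted as a known result with attribution to Kanter (extending the Banach-space case of Bretagnolle--Dacunha-Castelle--Krivine). Your sketch via symmetric $p$-stable random measures is exactly the classical construction underlying those references, so there is nothing to compare: your approach \emph{is} the standard one the citation points to, and the outline is correct.
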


Combining Theorem~\ref{thm:AMS} with previous results achieved by Paley \cite{Paley1936} (see also \cite{KadPel1962}) settles the embeddability of $\ell_p$-spaces into $L_r$-spaces.

\begin{corollary}\label{cor:AMS+Paley}
Given $0<r\le 2$, $\LP(L_r)=[r,2]$, while $\LP(L_r)=\{2,r\}$ for all $2\le r<\infty$.
\end{corollary}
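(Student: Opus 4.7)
The plan is to combine Theorem~\ref{thm:AMS} with classical results on $\ell_p$-subspaces of $L_r$-spaces; two largely independent arguments, in the regimes $r\le 2$ and $r\ge 2$, will yield both inclusions in each case.

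For the inclusion ``$\supseteq$'', observe first that for every $p\in[1,\infty)$ the normalized indicator functions of a sequence of pairwise disjoint sets of equal finite positive measure span an isometric copy of $\ell_p$ in $L_p$, so $\ell_p\sqsubseteq L_p$. When $r\le 2$, Theorem~\ref{thm:AMS} provides $L_p\sqsubseteq L_r$ for every $p\in[r,2]$, and composing the two embeddings gives $[r,2]\cap[1,\infty]\subseteq\LP(L_r)$. When $r\ge 2$, the disjoint-supports construction already yields $\ell_r\sqsubseteq L_r$, and $\ell_2\sqsubseteq L_r$ follows from Khintchine's inequality applied to the Rademacher functions in $L_r([0,1])$.

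For the reverse inclusion in the range $r\ge 2$, the key tool is the classical Paley--Kadec--Pe\l{}czy\'nski theorem \cites{Paley1936,KadPel1962}: every normalized weakly null basic sequence in $L_r$ has a subsequence equivalent either to the $\ell_2$- or to the $\ell_r$-unit vector basis. Applied to the image of the canonical basis of $\ell_p$ under a hypothetical embedding $\ell_p\hookrightarrow L_r$, this forces $p\in\{2,r\}$, since no infinite subset of the unit vector basis of $\ell_p$ is equivalent to that of $\ell_q$ for $q\ne p$. For the reverse inclusion in the range $0<r\le 2$, I would instead use the type and cotype of $L_r$---cotype~$2$ and type~$r$---both of which descend to subspaces: comparing with the cotype $\max(p,2)$ and type $\min(p,2)$ of $\ell_p$ rules out $p>2$ by the cotype inequality and $p<r$ by the type inequality, leaving exactly $p\in[r,2]$.

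The main obstacle is essentially bookkeeping: one must be slightly careful in the quasi-Banach regime $r<1$, where the type/cotype argument has to be invoked in its quasi-Banach formulation, but no new idea beyond the classical Banach-space proof is required.
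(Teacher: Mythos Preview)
Your argument is correct and coincides with the paper's approach: the paper merely records that the corollary follows from Theorem~\ref{thm:AMS} together with Paley \cite{Paley1936} and Kadec--Pe\l{}czy\'nski \cite{KadPel1962}, and your write-up is the natural unpacking of those citations, the type/cotype route for $r\le 2$ being the standard way to phrase the non-embedding half there. The only loose ends are the ones you already anticipate (for $r\ge 2$ dispose of $p=1$ and $p=\infty$ separately, via reflexivity and finite cotype, before applying the weakly-null dichotomy; for $0<r<1$ read ``type $r$'' as $r$-normability of $L_r$ versus non-$r$-normability of $\ell_p$ for $p<r$).
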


In turn, the embeddability between $\ell_p$-spaces was settled by Pe{\l}czy\'{n}ski \cite{Pel1960} in the case when $p\ge 1$, and extended to the whole range $p>0$ by Stiles \cite{Stiles1970}. We will use an easy generalization of this Stiljes's result.

\begin{theorem}\label{thm:PelSti}
Let $0<s<\infty$, $0<q\le 1$, and $(\XX_n)_{n=1}^\infty$ be a sequence of nonnull finite-dimensional $q$-Banach spaces. Set $\XX=\enpar{\oplus_{n=1}^\infty \XX_n}_{\ell_s}$. Then, $\LP(\XX)=\{s\}$.
\end{theorem}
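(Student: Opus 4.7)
The easy direction, $s\in\LP(\XX)$, follows by choosing unit vectors $e_n\in S_{\XX_n}$ (each $\XX_n$ is nonnull) and viewing each $e_n$ as the element of $\XX$ supported on the single coordinate $n$; this gives a sequence isometrically equivalent to the canonical $\ell_s$-basis. For the reverse inclusion, suppose $(f_k)$ is a seminormalized basic sequence in $\XX$ equivalent to the canonical basis of $\ell_p$, and write $f=(f(n))_n$ with $f(n)\in\XX_n$. The plan is to locate, after extracting a subsequence and applying a small perturbation, a disjointly supported block sequence with respect to the $\ell_s$-decomposition of $\XX$; such a block sequence is automatically equivalent to the $\ell_s$-basis, forcing $\ell_p\simeq\ell_s$ and hence $p=s$.

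First, I would reduce to the coordinate-null case. Since each $\XX_n$ is finite-dimensional and the sequence $(f_k(n))_k$ is bounded in $\XX_n$, a Cantor-diagonal extraction yields a subsequence $(f_{k_j})_j$ with $f_{k_j}(n)\to h(n)$ in $\XX_n$ for every $n$; Fatou's lemma gives $h\in\XX$. Passing to the differenced sequence $g_j:=f_{k_{2j}}-f_{k_{2j-1}}$ produces a seminormalized basic sequence that is still equivalent to the $\ell_p$-basis -- since, for every $p\in[1,\infty]$ with the convention $\ell_\infty=c_0$, the sequence $(e_{2j}-e_{2j-1})_j$ is equivalent to the canonical basis of $\ell_p$ -- and now satisfies the coordinate-null condition $g_j(n)\to 0$ as $j\to\infty$ for each $n$. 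With this in hand, I would carry out the standard alternating construction: fix summable errors $\epsilon_k>0$; having chosen $j_k$, use $g_{j_k}\in\XX$ to choose $N_k>N_{k-1}$ such that the $\XX$-norm of the tail $(g_{j_k}(n))_{n>N_k}$ is less than $\epsilon_k$; then, using $g_j(n)\to 0$ for each of the finitely many indices $n\leq N_k$, choose $j_{k+1}>j_k$ such that the $\XX$-norm of the head $(g_{j_{k+1}}(n))_{n\leq N_k}$ is less than $\epsilon_{k+1}$. Setting $h_k$ to coincide with $g_{j_k}$ on coordinates $N_{k-1}<n\leq N_k$ and to be zero elsewhere yields a disjointly supported sequence with $\|g_{j_k}-h_k\|_\XX$ as small as desired.

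Lemma \ref{lem:SPP}, applied via the $q$-Banach structure of $\XX$ (which is available since $\sup_n\kappa(\XX_n)<\infty$, with the $\epsilon_k$ calibrated against the uniformly bounded biorthogonal functionals of $(g_j)$), then yields that $(g_{j_k})_k$ is equivalent to $(h_k)_k$. Because $(h_k)$ has pairwise disjoint supports in the $\ell_s$-decomposition, the identity $\|\sum_k a_k h_k\|_\XX^s=\sum_k|a_k|^s\|h_k\|_\XX^s$ holds, and seminormalization makes $(h_k)$ equivalent to the canonical basis of $\ell_s$. Chaining the equivalences gives $\ell_p\simeq\ell_s$, so $p=s$. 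The main obstacle in this scheme is the case $p=1$, where the $\ell_1$-basis fails to be weakly null and a coordinate-null subsequence of $(f_k)$ cannot be produced by extracting a weakly null subsequence; the differencing maneuver is precisely what finesses this case uniformly with all other values of $p$.
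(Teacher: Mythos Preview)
Your proof is correct and follows essentially the same route as the paper's: extract a coordinate-wise convergent subsequence by compactness of finite-dimensional balls, pass to differences to get a coordinate-null sequence still equivalent to the $\ell_p$-basis, then glide humps and invoke Lemma~\ref{lem:SPP} to perturb onto a disjointly supported sequence, which is automatically equivalent to the $\ell_s$-basis. One small correction: $\XX$ is a $\min\{q,s\}$-Banach space rather than a $q$-Banach space (the paper records this explicitly at the outset), so the exponent in the small-perturbation estimate should be $r=\min\{q,s\}$; this does not affect the logic of your argument.
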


\begin{proof}
Note that $\XX$ is a $r$-Banach space, where $r=\min\{q,s\}$. Throughout this proof, the support of $x=(x_n)_{n=1}^\infty\in\XX$ will be the set
\[
\enbrace{n\in\NN \colon x_n\not=0}.
\]

Let $0<p\le \infty$ and $\YB=(\yy_k)_{k=1}^\infty$ be a sequence in $\XX$ equivalent to the unit vector system of $\ell_p$ ($c_0$ if $p=\infty$). Since the unit ball of $\XX_n$ is compact for all $n\in\NN$, passing to a subsequence we can assume that $\YB$ converges coordinate-wise. Replacing $\YB$ with $(\yy_{2k-1}-\yy_{2k})_{n=1}^\infty$ we can assume that $\YB$ converges to zero coordinate-wise. Since the unit vector system is a minimal system of $\ell_p$, $\YB$ is a minimal system of $\YY=[\YB]$. Let $(\yy_k^*)_{k=1}^\infty$ be the coordinate functionals for $\YB$ within $\YY$. Pick a sequence $(\varepsilon_k)_{k=1}^\infty$ in $(0,\infty)$ with $\sum_{k=1}^\infty \norm{\yy_k^*}^r \varepsilon_k^r <1$. By the gliding-hump technique, passing to a subsequence we can assume that there is a disjointly supported sequence $\XB=(\xx_k)_{k=1}^\infty$ in $\XX$ such that $\norm{\yy_k-\xx_k}\le\varepsilon_k$ for all $k\in\NN$. By Lemma~\ref{lem:SPP}, $\YB$ is equivalent to $\XB$. In turn, $\XB$ is equivalent to the unit vector system of $\ell_s$. Hence $p=s$.
\end{proof}

The following result was tailored by the authors of \cite{CembranosMendoza2011} to prove that the Banach spaces $\ell_p(\ell_q)$, $p$, $q\in[1,\infty]$, are pairwise non-isomorphic. Although they only dealt with the case $s\ge 1$, their proof extends verbatim to the case when $s>0$.

\begin{theorem}[\cite{CembranosMendoza2011}*{Corollary 2.2}]\label{thm:CM}
Let $0< s\le \infty$, and use the convention that $\ell_\infty$ means $c_0$. Let $\XX$ and $\YY$ be quasi-Banach spaces. If $\XX^2 \simeq \XX$ and $\YY \sqsubseteq \ell_s(\XX)$, then either $\YY \sqsubseteq \XX$ or $s\in\LP(\YY)$.
\end{theorem}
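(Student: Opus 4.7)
The plan is to split into two cases according to whether some initial-block projection of the embedding is bounded below on $\YY$. Let $T\colon\YY\to \ell_s(\XX)$ be an isomorphic embedding and, for $n\in\NN$, let $P_n$ denote the canonical projection of $\ell_s(\XX)$ onto its first $n$ coordinates. From $\XX^2\simeq\XX$ an induction yields $\XX^n\simeq\XX$, so the range of $P_n$ is isomorphic to $\XX$ for every $n\in\NN$. If $P_n\circ T$ is bounded below for some $n$, then $\YY\sqsubseteq\XX$ and we are done. Otherwise I shall exhibit inside $\YY$ a copy of the unit vector system of $\ell_s$ (with the convention $\ell_\infty=c_0$), which yields $s\in\LP(\YY)$.

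In this second scenario I run a gliding-hump argument. Fix $q\in(0,1]$ so that $\ell_s(\XX)$ is a $q$-Banach space, together with a fast decreasing sequence $(\varepsilon_k)_{k=1}^\infty$ in $(0,\infty)$, to be specified later. Since every element of $\ell_s(\XX)$ has arbitrarily small tails in coordinates (this holds equally when $s=\infty$, by the definition of $c_0$), and since no $P_n\circ T$ is bounded below, I can inductively select $\yy_k\in S_\YY$ and integers $0=n_0<n_1<n_2<\cdots$ such that
\[
\norm{P_{n_{k-1}} T(\yy_k)} < \varepsilon_k \quad\text{and}\quad \norm{(\Id-P_{n_k}) T(\yy_k)} < \varepsilon_k.
\]
Set $\bm{z}_k=(P_{n_k}-P_{n_{k-1}})T(\yy_k)$. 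These vectors are pairwise disjointly supported in $\ell_s(\XX)$, and the $q$-triangle inequality gives $\norm{T(\yy_k)-\bm{z}_k}^q<2\varepsilon_k^q$, so for sufficiently small $\varepsilon_k$ the sequence $(\bm{z}_k)$ is semi-normalized.

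A direct computation with the defining quasi-norm of $\ell_s(\XX)$ shows that any disjointly supported semi-normalized sequence there is equivalent to the unit vector system of $\ell_s$ (of $c_0$ when $s=\infty$); in particular $(\bm{z}_k)$ is a minimal system whose biorthogonal functionals $(\bm{z}_k^*)$ have uniformly bounded norms. Choosing $(\varepsilon_k)$ so that $\sum_k \norm{\bm{z}_k^*}^q \norm{T(\yy_k)-\bm{z}_k}^q<1$, Lemma~\ref{lem:SPP} yields that $(T(\yy_k))$, and hence $(\yy_k)$, is equivalent to the unit vector system of $\ell_s$. This establishes $s\in\LP(\YY)$.

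I expect the main obstacle to be the gliding-hump step itself: the head and tail estimates must be produced simultaneously, and the sizes $\varepsilon_k$ must be calibrated against the modulus of concavity of $\ell_s(\XX)$ for the perturbation principle to apply. Everything else is either routine bookkeeping, such as the isomorphism $\XX^n\simeq\XX$, or a direct appeal to Lemma~\ref{lem:SPP}.
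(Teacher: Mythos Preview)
The paper does not supply its own proof of this theorem; it quotes the result from \cite{CembranosMendoza2011}*{Corollary 2.2} and only remarks that the original argument, written for $s\ge 1$, extends verbatim to $s>0$. Your proposal is correct and is exactly the standard gliding-hump argument behind that result: dichotomize on whether some $P_n\circ T$ is bounded below, and in the negative case perturb to a disjointly supported sequence in $\ell_s(\XX)$.

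One bookkeeping point worth making explicit, since you flag it yourself: the apparent circularity between choosing $(\varepsilon_k)$ and controlling $\norm{\bm{z}_k^*}$ is harmless. Because $\norm{T(\yy_k)}\ge m>0$ for a constant $m$ depending only on $T$, requiring $2\varepsilon_k^q<m^q/2$ forces $\norm{\bm{z}_k}\ge (m^q/2)^{1/q}$, and then the coordinate projections in $\ell_s(\XX)$ give $\norm{\bm{z}_k^*}\le (2/m^q)^{1/q}$ \emph{a priori}. So one may fix $(\varepsilon_k)$ at the outset with $\sum_k\varepsilon_k^q$ small enough for Lemma~\ref{lem:SPP}, and the rest is routine.
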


Given $t\in(0,\infty)$ and a quasi-Banach function space $\LL$ built from a function quasi-norm $\rho$ over a measure space $(\Omega,\Sigma,\mu)$, we denote by $\LL^{(t)}$ the quasi-Banach function space constructed from the function quasi-norm defined by
\[
f\mapsto \enpar{\rho(f^t)}^{1/t}, \quad f\in L_0^+(\mu).
\]
We call $\LL^{(t)}$ the \emph{$t$-convexification} of $\LL$.

\begin{lemma}\label{lem:DisjSup}
Let $\LL$ be quasi-Banach function space and $\UL$ be a quasi-Banach function space over a countable set $\Nt$. Suppose there is a disjointly supported family $(f_n)_{n\in\Nt}$ in $\LL$ equivalent to the unit vector system of $\UL$. Then, $\UL^{(t)} \sqsubseteq \LL^{(t)}$ for all $0<t<\infty$.
\end{lemma}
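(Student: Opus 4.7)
The plan is to build the embedding explicitly from the given disjointly supported sequence by taking an appropriate root, exploiting the fact that on disjointly supported families, the $t$-convexification norm reduces to the original norm applied to the $t$-th powers of the moduli.

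First, I would fix the setup. Write $\LL$ as a quasi-Banach function space over some measure space $(\Omega,\Sigma,\mu)$ with underlying function quasi-norm $\rho$. For each $n\in\Nt$ define
\[
g_n = \abs{f_n}^{1/t}\,\sigma_n,
\]
where $\sigma_n$ is the unimodular phase of $f_n$ (set to $0$ off $\supp(f_n)$). Then the $g_n$ are disjointly supported, $\abs{g_n}^t=\abs{f_n}$, and therefore $g_n\in\LL^{(t)}$. The key algebraic identity, valid because the $g_n$ are pairwise disjointly supported, is that for any finitely supported scalars $(a_n)_{n\in\Nt}$,
\[
\abs{\sum_{n\in\Nt} a_n\, g_n}^t = \sum_{n\in\Nt} \abs{a_n}^t \abs{f_n}.
\]

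Next I would compute the $\LL^{(t)}$-norm of such a finite sum and bring in the hypothesis. By the definition of $t$-convexification, and then using disjoint support of the $f_n$ on the right to recognise the inner sum as $\bigl|\sum_{n\in\Nt}\abs{a_n}^t f_n\bigr|$,
\[
\norm{\sum_{n\in\Nt} a_n\, g_n}_{\LL^{(t)}}^{t}
=\rho\!\enpar{\sum_{n\in\Nt} \abs{a_n}^t \abs{f_n}}
=\norm{\sum_{n\in\Nt} \abs{a_n}^t f_n}_{\LL}.
\]
Now the equivalence of $(f_n)_{n\in\Nt}$ with the unit vector system of $\UL$ furnishes a constant $C\ge 1$ with
\[
\frac{1}{C}\,\norm{(b_n)}_{\UL}
\le \norm{\sum_{n\in\Nt} b_n\, f_n}_{\LL}
\le C\,\norm{(b_n)}_{\UL}
\]
for every $(b_n)\in c_{00}(\Nt)$. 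Applied to $b_n=\abs{a_n}^t$, and using that by definition of $\UL^{(t)}$ one has $\norm{(\abs{a_n}^t)}_{\UL}=\norm{(a_n)}_{\UL^{(t)}}^t$, this gives
\[
\frac{1}{C^{1/t}}\,\norm{(a_n)}_{\UL^{(t)}}
\le \norm{\sum_{n\in\Nt} a_n\, g_n}_{\LL^{(t)}}
\le C^{1/t}\,\norm{(a_n)}_{\UL^{(t)}}.
\]
Therefore $(g_n)_{n\in\Nt}$ is a sequence in $\LL^{(t)}$ equivalent to the unit vector system of $\UL^{(t)}$, which yields $\UL^{(t)} \sqsubseteq \LL^{(t)}$.

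The only non-routine point is really a bookkeeping one: making sure the map $f\mapsto \abs{f}^{1/t}\sigma$ interacts correctly with the $t$-convexification and that the disjoint-support identity transfers cleanly between $\LL$ and $\LL^{(t)}$. Once that identity is in hand, the argument is just a change of variables $b_n=\abs{a_n}^t$ in the given equivalence.
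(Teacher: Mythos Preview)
Your argument is correct and follows the same route as the paper: define the embedding via $(a_n)\mapsto \sum_n a_n\,\abs{f_n}^{1/t}$ (you add an inessential phase factor), use disjoint support to compute the $\LL^{(t)}$-norm as $\rho(\sum_n\abs{a_n}^t\abs{f_n})$, and then invoke the given equivalence with $b_n=\abs{a_n}^t$. The paper states this in one line; you have simply unpacked the computation.
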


\begin{proof}
The mapping
\[
(a_n)_{n\in\Nt} \mapsto \sum_{n\in\Nt} a_n \abs{f_n}^{1/t}.
\]
defines an isomorphic embedding of $\UL^{(t)}$ into $\LL^{(t)}$.
\end{proof}

We are now ready to give the embeddability result that will allow us to prove Theorem~\ref{thm:mainRF}.

\begin{theorem}\label{lem:lpEmbeds}
Let $0<s<r<\infty$. Then, $[s,r] \subset \LP(L_s(\ell_r))$.
\end{theorem}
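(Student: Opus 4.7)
The plan is to exhibit, for each $p\in[s,r]$, a disjointly supported sequence $(f_n)_{n=1}^\infty$ in $L_s(\ell_r)$ that is equivalent to the unit vector basis of $\ell_p$. I work with $L_s(\ell_r)$ realized over a product measure space $(\Omega,P)\times\NN$ for a suitable probability space $(\Omega,P)$.

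The two endpoints I would handle directly by ``tensor'' constructions. For $p=r$, fix a unit vector $g\in L_s$ and set $f_n(\omega,m):=g(\omega)\delta_{n,m}$; the $f_n$ are pairwise disjointly supported in $\Omega\times\NN$, and a one-line computation gives $\norm{\sum_n a_n f_n}_{L_s(\ell_r)}=\norm{a}_{\ell_r}$. For $p=s$, take pairwise disjoint unit functions $(g_n)_n$ in $L_s$ and set $f_n(\omega,m):=g_n(\omega)\delta_{1,m}$, so that $\norm{\sum_n a_n f_n}_{L_s(\ell_r)}=\norm{a}_{\ell_s}$.

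For the interior range $s<p<r$, the plan is to use heavy-tailed fiber variables. Let $\xi\ge 0$ have Pareto tail $P(\xi>t)=t^{-p}$ for $t\ge 1$, so that $\xi\in L_s$ (because $s<p$), and let $(\xi_n)_n$ be i.i.d.\ copies. Define $f_n(\omega,m):=\xi_n(\omega)\delta_{n,m}$, again disjointly supported in $\Omega\times\NN$. Fubini yields
\[
\norm{\sum_n a_n f_n}_{L_s(\ell_r)}^{s}=\int_\Omega \Bigl(\sum_m |a_m|^r \eta_m(\omega)\Bigr)^{s/r}\,dP(\omega),
\]
where $\eta_m:=\xi_m^r$ are i.i.d.\ with $P(\eta_m>t)=t^{-q}$ for $t\ge 1$, $q:=p/r\in(s/r,1)$, and the outer exponent is $\beta:=s/r\in(0,q)$.

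The crux is then the heavy-tail stability estimate: for $\eta_m$ i.i.d.\ with $P(\eta_m>t)\asymp t^{-q}$, any $\beta\in(0,q)$, and positive weights $(b_m)$ with $\sum_m b_m^q<\infty$,
\[
\int_\Omega \Bigl(\sum_m b_m \eta_m\Bigr)^{\beta}\,dP \;\asymp\; \Bigl(\sum_m b_m^q\Bigr)^{\beta/q}.
\]
Applied with $b_m=|a_m|^r$ and $\beta=s/r$, noting $b_m^q=|a_m|^p$, this gives $\norm{\sum_n a_n f_n}_{L_s(\ell_r)}\asymp(\sum_n |a_n|^p)^{1/p}$, as required. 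The lower bound is routine: dominate $\sum_m b_m \eta_m\ge\max_m b_m \eta_m$ and compute the Fr\'echet-type tail of the maximum of independent Pareto variables. The main obstacle is the upper bound: since $q<1$ the $\eta_m$ have infinite mean, so one needs a Nagaev-style tail inequality $P(\sum_m b_m\eta_m>t)\lesssim(\sum_m b_m^q)\,t^{-q}$, derived by a careful truncation argument separating contributions from $\eta_m$ above and below the threshold $t$. Integrating the resulting tail bound against $\beta t^{\beta-1}\,dt$ then produces the claimed upper bound on the $\beta$-th moment.
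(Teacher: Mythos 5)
Your proposal is correct, but it takes a genuinely different route from the paper. The paper's proof is a three-step reduction: it picks $t=2/r$, invokes Kanter's theorem to get $\ell_{tp}\sqsubseteq L_{ts}$ for $ts\le tp\le 2$, then uses the square-function theorem for unconditional basic sequences in L-concave lattices (Theorem~\ref{lem:AANew}) to realize that copy of $\ell_{tp}$ as a \emph{disjointly supported} sequence in $L_{ts}(\ell_2)$, and finally applies the convexification lemma (Lemma~\ref{lem:DisjSup}) with exponent $1/t$ to land in $L_s(\ell_r)$. You instead build the disjoint sequences by hand: the two endpoint tensor constructions are exactly right, and for $s<p<r$ your Pareto construction reduces the norm equivalence to the moment estimate $\int(\sum_m b_m\eta_m)^\beta\,dP\asymp(\sum_m b_m^q)^{\beta/q}$ for i.i.d.\ $\eta_m$ with tail $t^{-q}$, $0<\beta<q<1$, which is a true (and classical) fact; your identification of the upper bound as the crux, and of the Nagaev-type truncation $P(\sum_m b_m\eta_m>t)\lesssim(\sum_m b_m^q)t^{-q}$ as the tool, is accurate, and the constants it yields depend only on $q$ and $\beta$, so the equivalence is uniform over finitely supported $(a_m)$ as needed. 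The trade-off: your argument is self-contained and avoids both the lattice square-function machinery and the convexification trick, at the price of carrying out a nontrivial probabilistic estimate for infinite-mean regularly varying sums; the paper's argument is shorter because Theorem~\ref{lem:AANew} and Lemma~\ref{lem:DisjSup} are already developed and reused elsewhere, and it outsources all the probability to Kanter's theorem. At bottom both rest on stable-type random variables, since that is how Kanter's embedding is produced.
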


\begin{proof}
Fix $p\in[s,r]$. Pick $t\in(0,\infty)$ such that $tr=2$. Since $ts \le tp \le tr$, $\ell_{tp} \sqsubseteq L_{st}$ by Theorem~\ref{thm:AMS}. Hence, by Theorem~\ref{lem:AANew}, $L_{ts}(\ell_2)$ has a disjointly supported sequence equivalent to the unit vector system of $\ell_{tp}$. By Lemma~\ref{lem:DisjSup}, $\ell_{qtp} \sqsubseteq L_{qts} (\ell_{q2})$ for all $0<q<\infty$. Choosing $q=1/t$, we are done.
\end{proof}

\begin{proof}[Proof of Theorem~\ref{thm:mainRF}]
Assume by contradiction that $L_2(\XX)\simeq \ell_2(\XX)$. If $\YY=\XX^*$, then, by duality,
\[
L_2(\YY) \sqsubseteq (L_2(\XX))^*\simeq (\ell_2(\XX))^*\simeq \ell_2(\YY).
\]
Therefore, $L_2(\YY) \sqsubseteq \ell_2(\YY)$ in any case. Consequently, $L_2(\ell_r) \sqsubseteq \ell_2(\YY)$. By Theorem~\ref{lem:lpEmbeds}, $p\in\LP(\ell_2(\YY))$. Hence, by Theorem~\ref{thm:CM}, $2\in \LP(\ell_p)$. Since this assertion conflicts with Theorem \ref{thm:PelSti}, we are done.
\end{proof}

\begin{corollary}
Let $1<p<\infty$, $p\not=2$. Then, $\ell_2(L_p)\not\simeq L_2(L_p)$, and $\ell_2(\ell_p)\not\simeq L_2(\ell_p)$.\end{corollary}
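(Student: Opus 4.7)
The plan is to derive the corollary as a direct application of Theorem~\ref{thm:mainRF}. For $\XX\in\{L_p,\ell_p\}$ the theorem demands exponents $2<a<b<\infty$ and a choice $\YY\in\{\XX,\XX^*\}$ such that $b\in\LP(\YY)$ and $a\notin\LP(\YY)$. To feed it suitable exponents I would first reduce to the case where the defining exponent exceeds $2$: set $\YY=\XX$ when $p>2$, and $\YY=\XX^*$ when $1<p<2$. Because $(L_p)^*\simeq L_{p'}$ and $(\ell_p)^*\simeq\ell_{p'}$ with $p'=p/(p-1)$, in all four situations $\YY$ has the form $L_q$ or $\ell_q$ for some $q>2$.

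Next I would read off the relevant values of $\LP$ from earlier in the paper. Corollary~\ref{cor:AMS+Paley} gives $\LP(L_q)=\{2,q\}$ when $q>2$, and Theorem~\ref{thm:PelSti}, applied with $s=q$ and one-dimensional summands, gives $\LP(\ell_q)=\{q\}$. In either case $q\in\LP(\YY)$, while any $a\in(2,q)$ satisfies $a\notin\LP(\YY)$ (since $a\neq 2$ and $a<q$). Such an $a$ exists because $q>2$; taking $b=q$, the hypotheses of Theorem~\ref{thm:mainRF} are met, and it delivers $L_2(\XX)\not\simeq\ell_2(\XX)$ in all four cases.

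The statement is essentially a bookkeeping reduction with no substantive obstacle. The only point worth flagging is the duality step: Theorem~\ref{thm:mainRF} is formulated under the constraint $2<p<r$, and passing to $\XX^*$ is precisely what positions us in that regime when $p\in(1,2)$. Everything else amounts to matching the known values of $\LP(L_q)$ and $\LP(\ell_q)$ against the theorem's hypothesis.
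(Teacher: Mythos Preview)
Your proposal is correct and follows essentially the same approach as the paper, which simply says to combine Theorem~\ref{thm:mainRF} with Corollary~\ref{cor:AMS+Paley} or Theorem~\ref{thm:PelSti}. Your explicit handling of the duality reduction to reach an exponent $q>2$ and the choice of $a\in(2,q)$, $b=q$ spells out exactly the bookkeeping the paper leaves implicit.
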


\begin{proof}
Just combine Theorem~\ref{thm:mainRF} with Corollary~\ref{cor:AMS+Paley} or Theorem~\ref{thm:PelSti}.
\end{proof}
% ------------------------------------------------------------------------
\section{Embeddings of \texorpdfstring{$\ell_p$}{}-spaces into mixed-norm spaces}\label{sect:lpembeds}\noindent
% ------------------------------------------------------------------------
We will compute $\LP(\XX)$ in the case when $\XX$ is $L_s(L_r)$, $\ell_s(L_r)$, $L_s(\ell_r)$, $\ell_s(\ell_r)$ or $B_{r,s}$ for $r$, $s\in(0,\infty)$. We start with three consequences of Corollary~\ref{cor:AMS+Paley}, Theorem~\ref{thm:PelSti} and Theorem~\ref{thm:CM} whose straightforward proofs me omit.

\begin{theorem}\label{thm:LPBesov}
Let $0 <r\le \infty$ and $0<s< \infty$. Then $\LP(B_{r,s})=\{s\}$.
\end{theorem}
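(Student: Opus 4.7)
The plan is to recognize $B_{r,s}$ as the $\ell_s$-sum of a sequence of finite-dimensional spaces and apply Theorem~\ref{thm:PelSti} directly; no other ingredients are needed for this particular theorem.

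First I would set $\XX_n = \ell_r^n$ for each $n\in\NN$, which are nonnull finite-dimensional spaces, and take $q = \min\{r,1\} \in (0,1]$ so that each $\ell_r^n$ is a $q$-Banach space. With this choice, $\sup_n \kappa(\XX_n) < \infty$ (in fact it is uniformly controlled by $2^{1/q-1}$), so the direct sum
\[
B_{r,s} = \enpar{\oplus_{n=1}^\infty \ell_r^n}_{\ell_s}
\]
is a well-defined quasi-Banach space fitting exactly the template of Theorem~\ref{thm:PelSti}. Since $0 < s < \infty$ by hypothesis, Theorem~\ref{thm:PelSti} applies and yields $\LP(B_{r,s}) = \{s\}$.

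The only case requiring a moment's thought is $r = \infty$, where $\ell_\infty^n$ is understood as the $n$-dimensional Banach space with the supremum norm; this is still a finite-dimensional $1$-Banach space, so the argument is unaffected. There is no real obstacle: the entire content of the theorem is already packaged in Theorem~\ref{thm:PelSti}, which is precisely why the authors omit the proof as straightforward.
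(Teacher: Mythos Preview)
Your proposal is correct and matches the paper's intended argument: the authors state that Theorem~\ref{thm:LPBesov} is a straightforward consequence of Theorem~\ref{thm:PelSti} (together with Corollary~\ref{cor:AMS+Paley} and Theorem~\ref{thm:CM}, which are needed only for the companion results, not for this one), and your observation that $B_{r,s}=(\oplus_{n=1}^\infty \ell_r^n)_{\ell_s}$ fits the hypotheses of Theorem~\ref{thm:PelSti} is exactly the point.
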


\begin{theorem}[cf.\ \cite{CembranosMendoza2011}*{Proposition 2.3}]\label{thm:lplq}
$\LP(\ell_s(\ell_r))=\{r,s\}$ for all $r$, $s\in (0,\infty)$.
\end{theorem}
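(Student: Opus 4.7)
The plan is to split into the two inclusions and derive each from the general tools already recorded.

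For $\{r,s\} \subseteq \LP(\ell_s(\ell_r))$, I would exhibit the two copies of $\ell_p$ directly. The embedding $\ell_r \hookrightarrow \ell_s(\ell_r)$ is obtained by placing a vector in a single $\ell_r$-block (isometrically). The embedding $\ell_s \hookrightarrow \ell_s(\ell_r)$ is obtained by the ``diagonal'' map sending a sequence $(a_n)_{n\in\NN}$ to the element whose $n$-th $\ell_r$-block is $a_n \ee_1$ (the first unit vector of $\ell_r$); this is again an isometry from the definition of $\ell_s(\ell_r)$.

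For the reverse inclusion, suppose $p \in \LP(\ell_s(\ell_r))$, i.e.\ $\ell_p \sqsubseteq \ell_s(\ell_r)$. Since $\ell_r$ is isomorphic to $\ell_r \oplus \ell_r$, i.e.\ $\ell_r^2 \simeq \ell_r$, Theorem~\ref{thm:CM} (applied with $\XX=\ell_r$ and $\YY=\ell_p$) yields the dichotomy: either $\ell_p \sqsubseteq \ell_r$, or $s \in \LP(\ell_p)$. In the first case, Theorem~\ref{thm:PelSti} applied to the scalar sequence $\XX_n = \FF$ (so that $(\oplus_n \XX_n)_{\ell_r} = \ell_r$) gives $\LP(\ell_r) = \{r\}$, hence $p = r$. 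In the second case, the same theorem gives $\LP(\ell_p) = \{p\}$, hence $s = p$. Either way $p \in \{r, s\}$.

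There is no real obstacle: the statement is essentially a bookkeeping consequence of Theorem~\ref{thm:CM} (which separates one level of the mixed structure) followed by Theorem~\ref{thm:PelSti} (which identifies $\LP$ of a single $\ell_q$). The only point to be mindful of is ensuring that Theorem~\ref{thm:CM} is applicable in the quasi-Banach range $r,s \in (0,\infty)$, but the theorem is stated for arbitrary quasi-Banach spaces, and likewise Theorem~\ref{thm:PelSti} covers $s \in (0,\infty)$, so the full range claimed in the statement is handled uniformly.
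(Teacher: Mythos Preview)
Your proposal is correct and follows exactly the route the paper indicates: the paper omits the proof, stating only that it is a straightforward consequence of Corollary~\ref{cor:AMS+Paley}, Theorem~\ref{thm:PelSti} and Theorem~\ref{thm:CM}, and your argument uses precisely Theorem~\ref{thm:CM} to split off the outer $\ell_s$ and then Theorem~\ref{thm:PelSti} to identify $\LP(\ell_r)$ and $\LP(\ell_p)$ (Corollary~\ref{cor:AMS+Paley} is not needed for this particular statement, only for the companion Theorem~\ref{thm:lpLq}).
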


\begin{theorem}\label{thm:lpLq}
Let $r$, $s\in (0,\infty)$.
\begin{itemize}
\item If $r\le s \le 2$, then $\LP(\ell_s(L_r))=[r,2]$.
\item If $r\le 2 \le s$, then $\LP(\ell_s(L_r))=[r,2] \cup\{s\}$.
\item If $2\le r \le s$, then $\LP(\ell_s(L_r))=\{2, r, s\}$.
\item If $s \le r \le 2$, then $\LP(\ell_s(L_r))=[r,2] \cup\{s\}$.
\item If $\max\{2,s\} \le r$, then $\LP(\ell_s(L_r))=\{2, r, s\}$.
\end{itemize}
\end{theorem}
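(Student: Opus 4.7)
The plan is to prove the single identity
\[
\LP(\ell_s(L_r)) = \LP(L_r) \cup \{s\},
\]
and then read off the five cases by substituting the value of $\LP(L_r)$ provided by Corollary~\ref{cor:AMS+Paley}.

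For the inclusion $\supseteq$, I would check two isometric embeddings. First, the map sending $f \in L_r$ to the sequence $(f,0,0,\dots)$ is an isometric embedding of $L_r$ into $\ell_s(L_r)$, so $\LP(L_r) \subseteq \LP(\ell_s(L_r))$. Second, fixing any $f \in S_{L_r}$, the assignment $(a_n)_{n\in\NN}\mapsto (a_n f)_{n\in\NN}$ is an isometric embedding of $\ell_s$ into $\ell_s(L_r)$, giving $s\in \LP(\ell_s(L_r))$.

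For the reverse inclusion, let $p\in \LP(\ell_s(L_r))$, i.e.\ $\ell_p\sqsubseteq \ell_s(L_r)$. Since $L_r\simeq L_r\oplus L_r$, Theorem~\ref{thm:CM} applied with $\XX=L_r$ and $\YY=\ell_p$ yields the alternative $\ell_p\sqsubseteq L_r$ or $s\in \LP(\ell_p)$. In the first case, $p\in \LP(L_r)$. In the second, writing $\ell_p=(\oplus_{n=1}^\infty \FF)_{\ell_p}$ and applying Theorem~\ref{thm:PelSti} gives $\LP(\ell_p)=\{p\}$, hence $s=p$. Either way, $p\in \LP(L_r)\cup\{s\}$.

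Once the identity is established, the case analysis is purely combinatorial: if $r\le 2$ then $\LP(L_r)=[r,2]$, so the answer is $[r,2]\cup\{s\}$ (which collapses to $[r,2]$ precisely when $s\in[r,2]$, covering the case $r\le s\le 2$); if $r\ge 2$ then $\LP(L_r)=\{2,r\}$, so the answer is $\{2,r,s\}$. Matching these against the five hypotheses on $(r,s)$ reproduces the statement. There is no genuine obstacle here; the only point requiring a moment's thought is verifying the hypothesis $\XX^2\simeq\XX$ of Theorem~\ref{thm:CM} for $\XX=L_r$, which is standard (decompose the measure space into two isomorphic copies).
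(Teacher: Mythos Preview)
Your proof is correct and follows exactly the route the paper intends: the paper omits the argument, merely stating that Theorem~\ref{thm:lpLq} is a straightforward consequence of Corollary~\ref{cor:AMS+Paley}, Theorem~\ref{thm:PelSti} and Theorem~\ref{thm:CM}, and your write-up unpacks precisely that combination via the clean identity $\LP(\ell_s(L_r))=\LP(L_r)\cup\{s\}$.
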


Y.\@ Raynaud \cite{Raynaud1985} computed $\LP(L_s(L_r))$ in the case when $1\le r<s<\infty$. The following lemma, whose proof follows ideas from \cite{Raynaud1986}, aims to extend this result to the case when $0<r<s<\infty$.

\begin{lemma}\label{lem:RaynaudQB}
Let $0<s<\infty$ and $\XX$ and $\YY$ be quasi-Banach spaces. Suppose that $\YY$ has a complete minimal system $(\yy_n)_{n\in\Nt}$ and that $\YY\sqsubseteq L_s(\XX)$. Then, either $s\in\LP(\YY)$ or $\YY\sqsubseteq L_r(\XX)$ for all $r\in(0,s)$.
\end{lemma}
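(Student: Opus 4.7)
The plan is to reduce to a finite measure via Theorem~\ref{thm:CM} and then deploy a vector-valued Kadec--Pelczynski dichotomy together with Vitali's convergence theorem. Since the complete minimal system $(\yy_n)_{n\in\Nt}$ is countable, $\YY$ is separable, so the image of any embedding into $L_s(\XX)$ concentrates on a $\sigma$-finite portion of the underlying measure. Partitioning into unit-measure pieces identifies $L_s(\XX)\simeq \ell_s(\UU)$ with $\UU:=L_s([0,1],\XX)$; since splitting $[0,1]$ in halves gives $\UU^2\simeq\UU$, Theorem~\ref{thm:CM} provides either $s\in\LP(\YY)$ (in which case we are done) or an isomorphic embedding $T\colon\YY\hookrightarrow L_s([0,1],\XX)$.

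Fix $r\in(0,s)$. Since $[0,1]$ has finite measure, Hölder's inequality gives a bounded inclusion $L_s([0,1],\XX)\hookrightarrow L_r([0,1],\XX)$, so the composition $T_r$ is a bounded operator $\YY\to L_r([0,1],\XX)$. I would show $T_r$ is bounded below. Suppose not; pick $(y_n)\subset\YY$ with $\|y_n\|_\YY=1$, $\|T_ry_n\|_{L_r(\XX)}\to 0$, and, since $T$ is an isomorphic embedding, $\|Ty_n\|_{L_s(\XX)}\ge c>0$. Apply the vector-valued Kadec--Pelczynski dichotomy to the bounded sequence $(Ty_n)\subset L_s([0,1],\XX)$: after a subsequence, either (a) $(\|Ty_n\|_\XX^s)$ is uniformly integrable in $L_1([0,1])$, or (b) there is a disjointly supported seminormalized sequence in $L_s(\XX)$ that approximates $(Ty_n)$ in $L_s(\XX)$-norm. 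In case (b), Lemma~\ref{lem:SPP} makes $(Ty_n)$ equivalent to the unit vector basis of $\ell_s$ (since disjointly supported seminormalized sequences in $L_s(\XX)$ are so equivalent); composing this embedding with $T^{-1}$ on $[Ty_n]\subset T(\YY)$ exhibits $\ell_s\sqsubseteq\YY$, contradicting $s\notin\LP(\YY)$. In case (a), $\|T_ry_n\|_{L_r(\XX)}\to 0$ forces $\|Ty_n\|_\XX\to 0$ in measure on $[0,1]$, and Vitali's theorem applied to the uniformly integrable sequence $(\|Ty_n\|_\XX^s)$ gives $\|Ty_n\|_{L_s(\XX)}\to 0$, contradicting $c>0$. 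Hence $T_r$ is an isomorphic embedding, and $\YY\sqsubseteq L_r([0,1],\XX)\sqsubseteq L_r(\XX)$ by extending by zero to the full measure space.

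The main obstacle is the vector-valued, quasi-Banach Kadec--Pelczynski dichotomy itself. The non-trivial alternative requires a gliding-hump construction yielding disjoint sets $F_n$ with $\|Ty_n\chi_{F_n}\|_{L_s(\XX)}$ bounded below and residual tails $\|Ty_n\chi_{F_n^{c}}\|_{L_s(\XX)}$ small enough (in the quasi-Banach sense) to invoke Lemma~\ref{lem:SPP}. Coordinate functionals for the disjointly supported approximants are furnished by restricting and integrating against the supports $F_n$, which side-steps the failure of Hahn--Banach when $s<1$; this is where the countability inherent in the complete minimal system hypothesis becomes convenient, since one needs $T(\YY)$ to live on a $\sigma$-finite portion of the measure space for both the initial Theorem~\ref{thm:CM} reduction and the gliding-hump extraction.
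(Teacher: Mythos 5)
Your proposal is correct in substance and, at its core, runs on the same engine as the paper's proof: produce a normalized sequence in $\YY$ whose image under the embedding tends to $0$ in $L_r(\XX)$, hence in measure, while staying bounded below in $L_s(\XX)$; use the gliding hump for convergence in measure to replace (a subsequence of) it by a disjointly supported semi-normalized sequence; invoke Lemma~\ref{lem:SPP}; and conclude that $\ell_s\sqsubseteq\YY$. The genuine difference is where the minimal system enters. The paper does not merely negate ``$J\circ S$ is bounded below'': it negates boundedness below on every tail span $[\yy_n\colon n\notin F]$ (using the finite-codimension trick to justify this), precisely so that the extracted vectors $x_j$ sit in disjoint blocks of the minimal system and therefore form a minimal system themselves; Lemma~\ref{lem:SPP} is then applied with $(S(x_j))$ as the reference system. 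You instead apply Lemma~\ref{lem:SPP} with the disjointly supported approximants as the reference system, whose coordinate functionals on their closed span are explicitly bounded by the reciprocals of their norms (no Hahn--Banach needed when $s<1$, as you note). This lets you work with an arbitrary normalized sequence witnessing the failure of boundedness below, dispenses with the tail localization entirely, and in effect uses the complete minimal system only to guarantee separability. That is a real, if modest, simplification, and it suggests the minimality hypothesis is an artifact of the paper's route.

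One point needs repair. The ``vector-valued Kadec--Pe{\l}czy\'{n}ski dichotomy'' as you state it is false: failure of uniform integrability of $(\norm{Ty_n(\cdot)}_{\XX}^s)$ does \emph{not} yield a disjointly supported sequence approximating $(Ty_n)$ in $L_s(\XX)$-norm. Take $f_n=g+h_n$ with $g$ a fixed nonzero function and $(h_n)$ disjointly supported normalized spikes: uniform integrability fails, yet no subsequence is norm-close to a disjointly supported sequence, and no subsequence is equivalent to the $\ell_s$ unit vector basis. The gliding-hump construction you sketch in your last paragraph (large disjoint humps \emph{and} small residual tails) cannot be run from ``not (a)'' alone; it needs convergence to $0$ in measure. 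Fortunately you have that, since $\norm{T_ry_n}_{L_r(\XX)}\to 0$. So the correct organization is not a dichotomy at all: convergence in measure together with $\norm{Ty_n}_{L_s(\XX)}\ge c$ feeds directly into the gliding-hump lemma for convergence in measure (the one the paper cites from Albiac--Kalton), which produces the disjoint approximants outright; and the same convergence in measure makes your alternative (a) vacuous via Vitali, exactly as you observe. With the dichotomy replaced by this direct application, your argument is complete.
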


\begin{proof}
Assume without loss of generality that $\XX$ is a $q$-Banach space for some $0<q\le s$, so that $L_s(\XX)$ is a $q$-Banach space as well. Suppose there is $0<r<s$ such that $\YY\not\sqsubseteq L_r(\XX)$. Let $J$ be the inclusion of $L_s(\XX)$ into $L_r(\XX)$. Let $S\colon \YY \to L_s(\XX)$ be an isomorphic embedding. Given $F\subset \Nt$ finite, let $I_F$ the inclusion of
\[
\YY_F=[\yy_n \colon n\in\Nt \setminus F]
\]
into $\YY$. Assume by contradiction that $J\circ S \circ I_F$ is an isomorphic embedding. Since $\codim(\YY/\YY_F)=\abs{F}$,
\[
\YY\simeq \FF^{\abs{F}} \oplus \YY_F \sqsubseteq \FF^{\abs{F}}\oplus L_r(\XX) \simeq L_r(\XX).
\]
This absurdity shows that for any $F\subset\Nt$ finite and any $\varepsilon>0$ there is
\[
x\in S_\YY \cap \spn(\yy_n \colon n\in\Nt\setminus F)
\]
such that $\norm{J(S(x))}\le \varepsilon$. This fact allows us to recursively construct a pairwise disjoint sequence $(F_j)_{j=1}^\infty$ in $\Nt$ and a sequence $\XB=(x_j)_{j=1}^\infty$ in $S_\YY$ such that $x_j\in\spn(\yy_n \colon n\in F_n)$ for all $j\in\NN$, and
\[
\lim_j \norm{J(S(x_j))}=0.
\]
The sequence $\UB:=(S(x_j))_{j=1}^\infty$ converges to zero in measure. Since $\XB$ is a minimal system of $\YY$, $\UB$ is a minimal system of $\UU:=[S(x_j) \colon j\in\NN]$. Let $(\uu_j^*)_{j=1}^\infty$ be coordinate functionals for $\UB$ within $\UU$. Pick $(\varepsilon_j)_{j=1}^\infty$ in $(0,\infty)$ with
\[
\sum_{j=1}^\infty \norm{\uu_j^*}^q \varepsilon_j^q<\infty.
\]
By the gliding hump technique for the convergence in measure (see e.g.\@ \cite{AlbiacKalton2016}*{Lemma 5.2.1}), passing to a subsequence we can assume that there is a pairwise disjointly supported sequence $\VB:=(\vv_j)_{j=1}^\infty$ in $L_s(\XX)$ such that $\norm{S(x_j)-\vv_j}\le \varepsilon_j$ for all $j\in\NN$. By Lemma~\ref{lem:SPP}, $\XB$ and $\VB$ are equivalent. Hence, $\VB$ is semi-normalized. Therefore, $\VB$ is equivalent to the unit vector system of $\ell_s$.
\end{proof}

The next result complements that obtained by Y.\@ Raynaud \cite{Raynaud1985} and gives a positive answer to \cite{AnsorenaBello2025}*{Question 5.3}. Unlike the authors of these papers, we include locally non-convex spaces in our statement.

\begin{theorem}\label{thm:lpEmbedLsLr}
Let $r$, $s\in (0,\infty)$.
\begin{itemize}
\item If $r\le s \le 2$, then $\LP(L_s(L_r))=[r,2]$.
\item If $r\le 2 \le s$, then $\LP(L_s(L_r))=[r,2] \cup\{s\}$.
\item If $2\le r \le s$, then $\LP(L_s(L_r))=\{2, r, s\}$.
\item If $s \le \min\{2,r\}$, then $\LP(L_s(L_r))=[s,\max\{2,r\}]$.
\item If $2<s \le r$, then $\LP(L_s(L_r))=\{2\} \cup [s,r]$.
\end{itemize}
\end{theorem}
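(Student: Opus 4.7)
The plan is to verify both inclusions $\supseteq$ and $\subseteq$ in each of the five cases. For the inclusion $\supseteq$, I would exhibit the claimed $\ell_p$-subspaces from three sources: the isometric embeddings $L_s\sqsubseteq L_s(L_r)$ (constants in the second variable, over a probability space) and $L_r\sqsubseteq L_s(L_r)$ (constants in the first variable), combined with Corollary~\ref{cor:AMS+Paley} describing $\LP(L_s)$ and $\LP(L_r)$, together with Theorem~\ref{lem:lpEmbeds}, which via $\ell_r\sqsubseteq L_r$ gives $[s,r]\subset\LP(L_s(\ell_r))\subset\LP(L_s(L_r))$ whenever $s<r$. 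A short case-by-case check then exhausts all claimed values.

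For the reverse inclusion $\subseteq$, let $p\in\LP(L_s(L_r))$ and apply Lemma~\ref{lem:RaynaudQB} with $\YY=\ell_p$ (whose unit vector basis is a complete minimal system) and $\XX=L_r$: either $p=s$, or $\ell_p\sqsubseteq L_t(L_r)$ for every $t\in(0,s)$. In cases~1--3, where $r\le s$, I would substitute $t=r\in(0,s)$ (handling the boundary case $r=s$ via $L_s(L_r)\simeq L_r$) and invoke $L_r(L_r)\simeq L_r$; this forces $\ell_p\sqsubseteq L_r$, so $p\in\LP(L_r)$, and Corollary~\ref{cor:AMS+Paley} together with the separate possibility $p=s$ (which lies in the claimed set in each of these cases) closes the argument.

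Cases~4--5, where $s<r$, are the hard part, since the substitution $t=r$ is no longer admissible in the dichotomy. For case~4 ($s\le\min\{2,r\}$), my plan is to combine Theorem~\ref{lem:AANew}, which realizes any unconditional $\ell_p$-subspace of the L-concave lattice $L_s(L_r)$ as a disjointly supported sequence in the auxiliary lattice $L_s(L_r(\ell_2))$, with the convexification principle of Lemma~\ref{lem:DisjSup}, chosen at an exponent that lands the problem in a Banach-range situation already covered by cases~1--3 or directly controlled via disjoint supports and Theorem~\ref{thm:PelSti}. This should yield the bound $p\in[s,\max\{2,r\}]$ uniformly in the quasi-Banach range. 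For case~5 ($2<s\le r$), the analogous scheme combined with the dichotomy at $t=2$ only yields $p\in\{s\}\cup[2,r]$, and the finer claim $\{2\}\cup[s,r]$ requires ruling out $p\in(2,s)$. This is the main obstacle. I would resolve it by a descent argument: for every $t\in(2,s)$, Lemma~\ref{lem:RaynaudQB} delivers $\ell_p\sqsubseteq L_t(L_r)$, and invoking case~5 at the smaller outer exponent $t$ forces $p\in\{2\}\cup[t,r]$; intersecting over $t\uparrow s$ collapses the admissible set to $\{2\}\cup[s,r]$. To avoid circularity I would phrase this as a proof by contradiction applied to the infimum of those $s$ for which the case~5 conclusion fails, so that all necessary smaller instances have already been established.
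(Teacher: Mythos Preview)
Your treatment of the inclusion $\supseteq$ and of cases~1--3 (that is, $r\le s$) coincides with the paper: both apply Lemma~\ref{lem:RaynaudQB} with $\XX=L_r$ and use $L_r(L_r)\simeq L_r$ to obtain $\LP(L_s(L_r))\subseteq\LP(L_s)\cup\LP(L_r)$, which together with Corollary~\ref{cor:AMS+Paley} gives the stated sets.

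For cases~4--5 (that is, $s<r$) the paper does \emph{not} argue from the tools at hand; it simply invokes \cite{AnsorenaBello2025}*{Proposition~5.2} for the upper bound and Theorem~\ref{lem:lpEmbeds} for the lower bound. Your self-contained plan for case~5 has a genuine gap: the descent is circular and the infimum device does not close it. Set $B=\{s\in(2,r]:\LP(L_s(L_r))\not\subseteq\{2\}\cup[s,r]\}$ and $\sigma=\inf B$. If $\sigma\in B$ and $\sigma>2$ your argument does give a contradiction. But if $\sigma=2$, or if $\sigma>2$ with $\sigma\notin B$, pick $s\in B$ close to $\sigma$ and a witness $p\in(2,s)$; Lemma~\ref{lem:RaynaudQB} yields $\ell_p\sqsubseteq L_t(L_r)$ for all $t<s$, yet the case-5 conclusion is available only for $t<\sigma$, and intersecting $\{2\}\cup[t,r]$ over those $t$ gives merely $p\in\{2\}\cup[\sigma,r]$, hence $p\in[\sigma,s)$. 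Feeding this back into the dichotomy only shows $(p,s)\subseteq B$, which is perfectly compatible with $\sigma=\inf B$ and never contradictory. The structural reason is that for every $t\le p$ the set $\{2\}\cup[t,r]$ already contains $p$, so no descent to $t\le p$ can exclude it, while any $t\in(p,s)$ is precisely an unproved instance. Thus the ``intersection over $t\uparrow s$'' you need cannot be justified.

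Your sketch for case~4 is too schematic to evaluate; in the Banach range $s\ge 1$ the bound $p\in[s,\max\{2,r\}]$ is essentially a type--cotype statement, but the convexification route you outline through $L_s(L_r(\ell_2))$ does not obviously land in a space controlled by cases~1--3 or by Theorem~\ref{thm:PelSti}, and the full quasi-Banach claim for $0<s<1$ requires the external input the paper cites.
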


\begin{proof}
Suppose that $r\le s$. Since $L_r(L_r)\simeq L_r$, applying Lemma~\ref{lem:RaynaudQB} with $\XX=L_r$ gives $\LP(L_s(L_r))\subset \LP(L_s) \cup \LP(L_r)$. This inclusion yields the desired identity in all instances where $r\le s$. In turn, the instances where $r>s$ follow from combining \cite{AnsorenaBello2025}*{Proposition 5.2} with Theorem~\ref{lem:lpEmbeds}.
\end{proof}

To settle the embeddability of $\ell_p$-spaces into $L_s(\ell_r)$-spaces we need to develop new techniques.

\begin{lemma}\label{lem:A}
Let $\XX$ be a quasi-Banach space with a Schauder basis $\XB=(\xx_n)_{n=1}^\infty$, and $\LL$ be an absolutely continuous quasi-Banach function space over a (nonnull) $\sigma$-finite measure space $(\Omega,\Sigma,\mu)$. Let $(y_n)_{n=1}^\infty$ be a sequence in $\YY:=\LL(\XX)$. Suppose that
\[
\VV:=\enbrak{y_n \colon n\in\NN}\not\sqsubseteq \LL^m
\]
for all $m\in\NN $. Set for each $k$, $m\in\NN_0$ with $ k< m$
\[
\YY_{k,m}=\LL\enpar{\enbrak{\xx_n \colon k<n\le m}}, \quad \VV_{k,m}=\enbrak{y_n \colon k<n\le m}.
\]
Then, there are equivalent sequences $(u_j)_{j=1}^\infty$ and $(v_j)_{j=1}^\infty$ in $\YY\setminus\{0\}$, and increasing sequences $(k_j)_{k=0}^\infty$ and $(m_j)_{j=0}^\infty$ in $\ZZ$ such that $k_0=m_0=0$, and $u_j\in \YY_{m_{j-1},m_j}$ and $v_j\in \VV_{ k_{j-1}, k_j}$ for all $j\in \NN$.
\end{lemma}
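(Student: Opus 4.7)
The overall strategy is a gliding-hump construction in $\YY = \LL(\XX)$ driven by the Schauder basis of $\XX$, concluded by an application of the small-perturbation principle (Lemma~\ref{lem:SPP}). Let $K$ be the basis constant of $\XB$ and, for integers $0 \le k < m$, let $P_{k,m}(x) = \sum_{k < n \le m} \xx_n^*(x)\xx_n$; these are projections on $\XX$ of norm at most $2K$. They lift pointwise to bounded operators $Q_{k,m}\colon \YY \to \YY$ with image $\YY_{k,m}$. Two features of the family $\{Q_{k,m}\}$ do all the work. First, $Q_{0,m}(y) \to y$ in $\YY$ for every $y \in \YY$: the pointwise convergence $\norm{P_{0,m}(y(\omega)) - y(\omega)}_\XX \to 0$ is dominated by $(1+2K)\norm{y(\cdot)}_\XX \in \LL$, so absolute continuity of $\LL$ forces norm convergence. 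Second, $Q_{0,m_n}$ acts as the identity on $\YY_{m_{j-1},m_j}$ when $j \le n$ and as $0$ when $j > n$, by disjointness of basis-index ranges.

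To exploit the non-embeddability hypothesis, write $\widetilde{\VV}_k := [y_n \colon n > k]$ for the closed tail span. Then $\widetilde{\VV}_k \not\sqsubseteq \LL^m$ for every $m$: it has codimension at most $k$ in $\VV$, so any embedding into $\LL^m$ would produce one of $\VV$ into $\LL^{m+k}$. Because $Q_{0,M}$ sends $\widetilde{\VV}_k$ into $\YY_{0,M} \simeq \LL^M$, the restriction $Q_{0,M}|_{\widetilde{\VV}_k}$ is not bounded below. A standard normalize-and-approximate argument then yields, for any prescribed $k$, $M \in \NN_0$ and $\varepsilon > 0$, some $k' > k$ and a unit vector $v \in \VV_{k,k'}$ with $\norm{Q_{0,M}(v)} < \varepsilon$.

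With these ingredients in hand, the recursive construction is essentially routine. Fix $q \in (0,1]$ such that $\YY$ is $q$-Banach, and pick $(\varepsilon_j)_{j=1}^\infty$ in $(0,\infty)$ whose total $q$-th power is small enough for the SPP bookkeeping below. Starting from $k_0 = m_0 = 0$, at stage $j$ apply the previous paragraph with $k = k_{j-1}$ and $M = m_{j-1}$ to select $k_j > k_{j-1}$ and a unit vector $v_j \in \VV_{k_{j-1}, k_j}$ with $\norm{Q_{0, m_{j-1}}(v_j)}$ arbitrarily small; then use $Q_{0,m}(v_j) \to v_j$ to select $m_j > m_{j-1}$ with $\norm{v_j - Q_{0, m_j}(v_j)}$ arbitrarily small; set $u_j := Q_{m_{j-1},m_j}(v_j) = Q_{0,m_j}(v_j) - Q_{0,m_{j-1}}(v_j) \in \YY_{m_{j-1}, m_j}$. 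The $q$-triangle inequality yields $\norm{v_j - u_j} < \varepsilon_j$, and $u_j \ne 0$ once $\varepsilon_j$ is small enough.

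The main obstacle is verifying that $(u_j)$ is a minimal system in $[u_j \colon j \in \NN]$ whose biorthogonal functionals are uniformly bounded, which is the precondition for Lemma~\ref{lem:SPP}. Here the disjoint-index structure of the $\YY_{m_{j-1},m_j}$ pays off: for any finite combination $w = \sum_{j} c_j u_j$, the partial sum $\sum_{j \le n} c_j u_j$ coincides with $Q_{0, m_n}(w)$, so $(u_j)$ is a basic sequence in $[u_j \colon j \in \NN]$ with basis constant bounded by $2K$. Its coordinate functionals $u_j^*$ are then automatically bounded, with norms controlled by $K$, the modulus of concavity of $\YY$, and a uniform lower bound on $\norm{u_j}$. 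Choosing $(\varepsilon_j)$ small enough from the outset forces $\sum \norm{u_j^*}^q \norm{v_j - u_j}^q < 1$, and Lemma~\ref{lem:SPP} delivers the equivalence of $(v_j)$ and $(u_j)$.
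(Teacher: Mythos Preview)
Your proof is correct and follows essentially the same route as the paper's: lift the partial-sum projections of $\XB$ to $\YY$, use absolute continuity of $\LL$ for their SOT-convergence, use the non-embeddability hypothesis to show none of these restricted to a tail space is bounded below, run a gliding-hump recursion, and finish with Lemma~\ref{lem:SPP}. The only cosmetic difference is that the paper renorms $\XX$ at the outset so that $\XB$ is bimonotone (which makes the coordinate-functional bookkeeping cleaner, giving $\norm{u_j^*}\,\norm{u_j}=1$), whereas you carry the basis constant $K$ throughout; both choices are fine.
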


\begin{proof}
Assume without loss of generality that $\XX$ is a $q$-Banach space, $0<q\le 1$, and $\XB$ is a bimonotone basis of $\XX$. Set for each $m\in\NN_0$
\[
\VV_{m}^{\,0}=\spn(y_n \colon n>m), \quad \VV_m=\enbrak{y_n \colon n>m}.
\]
Let $S_m\colon \XX \to \XX$ is the $m$th partial sum operator relative to $\XB$. Let $P_m \colon\YY\to\YY$ be the operator given by
\[
P_m(f)(\omega)=S_m(f(\omega)), \quad f\in\YY, \, \omega\in\Omega.
\]
Since $\LL$ is absolutely continuous,
\begin{enumerate}[label=(\Alph*),leftmargin=*, widest=a]
\item\label{it:SOT} $\lim_m P_m=\Id_{\YY}$ in the strong operator topology.
\end{enumerate}
Fix $k$, $m\in\NN_0$. Assume by contradiction that $P_m|_{\VV_k}$ is an isomorphic embedding. Since $d(k):=\codim(\VV/\VV_k)\in\NN_0$,
\[
\VV \sqsubseteq \FF^{d(k)} \oplus \LL^m \sqsubseteq \LL^{m+d(k)}.
\]
This absurdity shows that
\begin{enumerate}[label=(\Alph*),leftmargin=*, widest=a,resume]
\item\label{it:small} for all $\varepsilon>0$ and $k$, $m\in\NN_0$ there is $h\in S_\YY\cap \VV_{k}^{\,0}$ with $\norm{P_m(h)}\le \varepsilon$.
\end{enumerate}
Choose a nonincreasing sequence $(\varepsilon_j)_{j=1}^\infty$ in $(0,2^{-1/q})$ with
\[
\sum_{j=1}^\infty \frac{2\varepsilon_j^q}{1-2 \varepsilon_j^q}< 1.
\]
Use \ref{it:SOT} and \ref{it:small} to recursively construct increasing sequences $(k_j)_{k=0}^\infty$ and $(m_j)_{j=0}^\infty$ in $\ZZ$ and a sequence $\VB:=(v_j)_{j=1}^\infty$ in $S_\YY$ such that $k_0=m_0=0$, and
\[
\norm{P_{m_{j-1}}(v_j)} \le \varepsilon_j, \quad \norm{v_j -P_{m_j}(v_j)} \le \varepsilon_j,
\]
and $v_j\in \YY_{ k_{j-1}, k_j}$ for all $j\in\NN$. Define $\UB=(u_j)_{j=1}^\infty$ by
\[
u_j= P_{m_j}(v_j) - P_{m_{j-1}}(v_j), \quad j\in\NN.
\]
Since $\norm{u_j-v_j}^q \le 2 \varepsilon_j^q$, $\norm{u_j}^q\ge 1 - 2 \varepsilon_j^q>0$ for all $j\in\NN$. Since, $u_j\in \YY_{m_{j-1},m_j}\setminus\{0\}$ for all $j\in\NN$, $\UB$ is a bimonotone basis of $\UU:=[\UB]$. Hence, there is a sequence $(u_j^*)_{j=1}^\infty$ of coordinate functionals for $\UB$ within $\UU$ such that
\[
\norm{u_j^*} \norm{u_j}=1, \quad j\in\NN.
\]
Hence, $\UB$ and $\VB$ are equivalent by Lemma~\ref{lem:SPP}.
\end{proof}

For further reference, we record a classical result.

\begin{theorem}[see \cite{Pel1960}]\label{thm:Pel}
Let $1<p<\infty$. Then, $L_p(\ell_2)\simeq L_p$ and $B_{2,p} \simeq \ell_p$.
\end{theorem}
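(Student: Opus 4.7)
The plan is to deduce both isomorphisms from Khintchine's inequality together with Pe\l{}czy\'nski's decomposition method. For the first isomorphism, I would first show that $\ell_2$ is isomorphic to a complemented subspace of $L_p[0,1]$ for $1<p<\infty$, via the Rademacher functions $(r_n)_{n=1}^\infty$: Khintchine's inequality gives that the map $(a_n)\mapsto \sum_n a_n r_n$ is an isomorphic embedding of $\ell_2$ into $L_p$, while the orthogonal projection
\[
P(f)=\sum_{n=1}^\infty \enpar{\int_0^1 f\, r_n}\, r_n
\]
onto its image is bounded on $L_p$ (directly for $p\ge 2$ by Khintchine, and by duality for $1<p<2$). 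Tensoring with the identity on $L_p$, this yields that $L_p(\ell_2)$ is a complemented subspace of $L_p(L_p)\simeq L_p$, while the reverse complementation $L_p\sqsubseteq L_p(\ell_2)$ is tautological.

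Next, both $L_p$ and $L_p(\ell_2)$ satisfy $\XX\simeq\ell_p(\XX)$, which is obtained by partitioning $[0,1]$ into countably many scaled copies of itself and rescaling (applied to the base measure in the vector-valued case). With mutual complementation and this self-reproducing property, Pe\l{}czy\'nski's decomposition method yields $L_p(\ell_2)\simeq L_p$.

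For $B_{2,p}\simeq\ell_p$ the finite-dimensional version of the Rademacher argument is required: the first $n$ Rademacher functions on the hypercube $\{-1,1\}^n$ endowed with normalized counting measure exhibit $\ell_2^n$ as a complemented subspace of the $2^n$-dimensional $\ell_p$-space, with constants uniform in $n$ (Khintchine is again responsible for the uniformity). Hence $B_{2,p}=(\oplus_n \ell_2^n)_{\ell_p}$ is complemented in $(\oplus_n \ell_p^{2^n})_{\ell_p}\simeq \ell_p$, and $\ell_p\sqsubseteq B_{2,p}$ trivially as a complemented subspace (e.g., by taking the first coordinate of each $\ell_2^n$). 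Since both spaces satisfy $\XX\simeq\ell_p(\XX)$ by regrouping coordinates, Pe\l{}czy\'nski's decomposition closes the argument. The main obstacle is the boundedness of the Rademacher projection on $L_p$ and its finite-dimensional variant with dimension-free constants, precisely where the hypothesis $1<p<\infty$ enters in an essential way.
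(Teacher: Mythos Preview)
The paper does not give its own proof of this statement; it merely records it as a classical result due to Pe\l{}czy\'nski \cite{Pel1960}. Your outline is correct and is essentially the classical argument: Khintchine's inequality shows that the Rademacher system spans a copy of $\ell_2$ inside $L_p$, the orthogonal projection onto this span is bounded on $L_p$ for $1<p<\infty$ (via Bessel and H\"older for $p\ge 2$, then duality for $1<p<2$), and the rest follows from the Pe\l{}czy\'nski decomposition method.

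One small remark. In the second isomorphism you assert that both $\ell_p$ and $B_{2,p}$ satisfy $\XX\simeq\ell_p(\XX)$ ``by regrouping coordinates''. For $\ell_p$ this is immediate, but for $B_{2,p}$ it is not literally a reindexing: the summands $\ell_2^n$ each occur once in $B_{2,p}$ and infinitely often in $\ell_p(B_{2,p})$, so one has to use that $\ell_2^n$ is $1$-complemented in $\ell_2^m$ for $m\ge n$ and run a separate decomposition argument. This is harmless, however, because the Pe\l{}czy\'nski decomposition method only requires the self-reproducing property $\XX\simeq\ell_p(\XX)$ for \emph{one} of the two spaces, and $\ell_p\simeq\ell_p(\ell_p)$ already suffices. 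The same comment applies to the first isomorphism, where $L_p\simeq\ell_p(L_p)$ alone is enough.
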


\begin{theorem}\label{thm:Lplq}
Let $0< r <s<\infty$.
\begin{itemize}
\item If $s\ge 2$, then $\LP( L_s(\ell_r))=A_{r,s}:=\{2,r,s\}$.
\item If $s\le 2$, then $\LP( L_s(\ell_r))=A_{r,s}:=\{r\} \cup [s,2]$.
\end{itemize}
\end{theorem}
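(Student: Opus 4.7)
The plan is to handle both inclusions in $\LP(L_s(\ell_r))=A_{r,s}$: the forward inclusion is direct, while the reverse uses the block-sequence extraction from Lemma~\ref{lem:A} together with a convexification trick that collapses the mixed-norm problem to a single Lebesgue space.

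For $A_{r,s}\subseteq\LP(L_s(\ell_r))$, I would first note that $\ell_r\sqsubseteq L_s(\ell_r)$ via the constant-fiber embedding and $L_s\sqsubseteq L_s(\ell_r)$ by injecting scalars into a fixed coordinate of $\ell_r$. Hence $\{r\}\cup\LP(L_s)\subseteq\LP(L_s(\ell_r))$, and Corollary~\ref{cor:AMS+Paley} identifies $\LP(L_s)$ as $\{2,s\}$ when $s\ge 2$ and as $[s,2]$ when $s\le 2$, matching $A_{r,s}$ in both cases.

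For the reverse inclusion, suppose $p\in\LP(L_s(\ell_r))$ with $p\notin A_{r,s}$; in particular $p\ne r$ and $p\notin\LP(L_s)$. Since $L_s^m\simeq L_s$, the condition $p\notin\LP(L_s)$ means $\ell_p\not\sqsubseteq L_s^m$ for all $m$, so Lemma~\ref{lem:A} applies with $\XX=\ell_r$ and $\LL=L_s$ to any $\ell_p$-basic sequence in $L_s(\ell_r)$. It produces a sequence $(u_j)$ equivalent to a normalized block sequence of the $\ell_p$-basis (hence, by the standard block-basis property of $\ell_p$, equivalent to the $\ell_p$-basis itself), with each $u_j$ supported on a slab $\Omega\times(m_{j-1},m_j]$. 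Thus $(u_j)$ is disjointly supported in $L_s(\ell_r)$ viewed as a function space over $\Omega\times\NN$.

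The key move is to invoke Lemma~\ref{lem:DisjSup} together with the identity $(L_s(\ell_r))^{(t)}=L_{st}(\ell_{rt})$, a direct computation from the definition of the $t$-convexification. This yields $\ell_{pt}\sqsubseteq L_{st}(\ell_{rt})$ for every $t>0$. The clever choice is $t=2/r$, which collapses the inner norm to $\ell_2$; by Theorem~\ref{thm:Pel} (applicable since $2s/r>2>1$) we get $L_{2s/r}(\ell_2)\simeq L_{2s/r}$, so $\ell_{2p/r}\sqsubseteq L_{2s/r}$. Corollary~\ref{cor:AMS+Paley} applied with $2s/r>2$ forces $2p/r\in\{2,2s/r\}$, i.e.\ $p\in\{r,s\}$. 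But $p\ne r$ by hypothesis, and $p\ne s$ because $s\in\LP(L_s)$ while $p\notin\LP(L_s)$—the desired contradiction.

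The main obstacle I foresee is guessing the right exponent $t=2/r$. More naive routes—such as working with the positive functions $\phi_j=\|u_j(\omega,\cdot)\|_{\ell_r}^r$ in $L_{s/r}$—only furnish an equivalence with $\ell_{p/r}$ on the positive cone, which does not in general upgrade to an honest embedding $\ell_{p/r}\sqsubseteq L_{s/r}$ and so fails to exclude candidates such as $p=2r$ in the sub-case $s\ge 2r$. The convexification route via Lemma~\ref{lem:DisjSup} bypasses this entirely by performing the disjointification and the rescaling simultaneously, and the choice $t=2/r$ reduces the two-index mixed-norm problem to the ordinary Lebesgue-space calculation of Corollary~\ref{cor:AMS+Paley}.
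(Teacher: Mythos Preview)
Your proposal is correct and follows essentially the same route as the paper: the reverse inclusion via Lemma~\ref{lem:A}, the perfect homogeneity of $\ell_p$, the convexification trick from Lemma~\ref{lem:DisjSup} with $t=2/r$, and the reduction to $L_{2s/r}$ through Theorem~\ref{thm:Pel} and Corollary~\ref{cor:AMS+Paley} are exactly what the paper does. Your treatment of the forward inclusion (via the trivial embeddings $\ell_r\sqsubseteq L_s(\ell_r)$ and $L_s\sqsubseteq L_s(\ell_r)$) is in fact cleaner than the paper's citation of Theorem~\ref{lem:lpEmbeds}, which is stated for the opposite ordering of $r$ and $s$ and is not actually needed here.
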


\begin{proof}
By Corollary~\ref{cor:AMS+Paley} and Theorem~\ref{lem:lpEmbeds}, $A_{r,s}\subset \LP( L_s(\ell_r))$. Pick $p\in(0,\infty]\setminus\{2,s\}$ if $s\le 2$, and $p\in(0,\infty]\setminus[2,s]$ if $s\ge 2$. Assume that $\ell_p\sqsubseteq L_s(\ell_r)$, with the convention that $\ell_\infty$ means $c_0$. Since $\ell_p \not\sqsubseteq L_s$ by Corollary~\ref{cor:AMS+Paley}, an application of Lemma~\ref{lem:A} gives $\VB=(v_j)_{j=1}^\infty$ in $L_s(\ell_r)$ and an increasing sequence $(m_j)_{j=0}^\infty$ in $\ZZ$ with $m_0=0$ such that
\[
\supp(v_j)\subseteq [0,1]\times \enpar{(m_{j-1},m_j]\cap\NN}, \quad j\in \NN,
\]
and $\VB$ is equivalent to a semi-normalized block basic sequence of unit vector system of $\ell_p$. Since the unit vector system of $\ell_p$ is perfectly homogeneous, $\VB$ is equivalent to the unit vector system of $\ell_p$. Set $t=2/r$. Since $\VB$ is disjointly supported, applying Lemma~\ref{lem:DisjSup} gives
\[
\ell_{tp} \sqsubseteq \XX:=L_{ts} (\ell_{tr}).
\]
Since $ts > tr=2$, $\XX\simeq L_{st}$ by Theorem~\ref{thm:Pel}. Consequently, by Corollary~\ref{cor:AMS+Paley}, $tp\in\{ts,tr\}$. Hence, $p=r$.
\end{proof}
% ------------------------------------------------------------------------
\section{On the isomorphic classification of mixed-norm Lebesgue spaces}\label{sect:iso}\noindent
% ------------------------------------------------------------------------
In this section we will focus on locally convex spaces. Given $r$, $s\in[1,\infty]$ we consider the set
\[
\Rt_{r,s}=\enbrace{ L_s(L_r), \ell_s(L_r),L_s(\ell_r), Z_{r,s}, B_{r,s}}
\]
consisting of all mixed-norm Lebesgue spaces associated with the pair $(r,s)$. It is known \cite{AnsorenaBello2025} that if $\XX\in \Rt_{r,s}$, $\YY\in\Rt_{p,q}$ and $\XX\simeq\YY$, then either $(p,q)=(r,s)$ or $1<q=s<\infty$ and $\{p,r\}=\{2,s\}$. If we focus on separable spaces, we must take into account the well-known isomorphisms $\ell_2\simeq L_2$, $\ell_p\simeq B_{2,p}$ and $L_p\simeq L_p(L_2)$ for all $1<p<\infty$ (see Theorem~\ref{thm:Pel}). As non separable spaces are concerned, it is known that $\ell_\infty=L_\infty$ (see \cite{Pel1958}) and that $B_{s,\infty} \simeq Z_{s,\infty}$ for all $0<s<\infty$ (see \cite{AlbiacAnsorena2017}*{Proposition 4.2}). Techniques related to those leading to the latter isomorphism yield other isomorphisms between mixed-norm spaces, which we record below.

\begin{lemma}\label{lem:entriestosum}
Let $\Xt$ and $\Yt$ be countable infinite families of quasi-Banach spaces with modulus of concavity uniformly bounded. Let $\LL$ denote the lattice $c_0$ or $\ell_s$, $0<s\le\infty$. If $\Xt \trianglelefteq \Yt \trianglelefteq \Xt$, then
\[
\LL(\LL(\Xt))\simeq \LL(\Xt) \simeq \LL(\Yt).
\]
Besides, if a quasi-Banach space $\XX$ complementably embeds into $\LL(\Xt)$, then $\LL(\XX)\trianglelefteq\LL(\Xt)$.
\end{lemma}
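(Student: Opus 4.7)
The plan is to deduce all three assertions from a single diagonal embedding construction combined with a Pe\l czy\'nski--Eilenberg swindle. Observe first that the relation $\trianglelefteq$ is transitive (with multiplicative constants), so the hypothesis yields in particular $\Xt\trianglelefteq\Xt$ and $\Yt\trianglelefteq\Yt$. The core technical step I would establish, stated for any pair of families $\Xt\trianglelefteq\Yt$ with constant $C$, is that $\LL(\LL(\Xt))$ is complementably embedded in $\LL(\Yt)$ with constants depending only on $C$ and the uniform concavity bound. To build the embedding, enumerate $\NN\times\NN$ as $(n_j,k_j)_{j\in\NN}$ and use the ``for all finite $F$'' clause in the definition of $\trianglelefteq$ to pick recursively $m_j\in\Mt\setminus\{m_1,\ldots,m_{j-1}\}$ with $\XX_{n_j}$ being $C$-complemented in $\YY_{m_j}$, yielding an injective $\sigma\colon\NN\times\NN\to\Mt$. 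Slot-wise assembly of the complemented embeddings $\XX_n\hookrightarrow\YY_{\sigma(n,k)}$ then produces the desired embedding and its projection; boundedness uses that for $\LL\in\{c_0,\ell_s\}$ the $\LL$-norm on $\NN\times\NN$ matches the $\LL$-norm on $\NN$ through any bijection, which is precisely where the hypothesis on $\LL$ is invoked.

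Applying this step to $(\Xt,\Xt)$ and setting $\XX:=\LL(\Xt)$, $\YY:=\LL(\LL(\Xt))=\LL(\XX)$, we obtain $\YY$ complementably embedded in $\XX$, say $\XX\simeq\YY\oplus B$. Using $\LL(\YY)\simeq\YY$ (mere reindexing of the lattice), the swindle reads
\[
\YY=\LL(\XX)\simeq\LL(\YY\oplus B)\simeq\LL(\YY)\oplus\LL(B)\simeq\YY\oplus\LL(B),
\]
so $\YY$ absorbs $\LL(B)$, and consequently
\[
\XX\simeq\YY\oplus B\simeq\YY\oplus\LL(B)\oplus B\simeq\YY\oplus\LL(B)\simeq\YY,
\]
using $B\oplus\LL(B)\simeq\LL(B)$. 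This proves $\LL(\LL(\Xt))\simeq\LL(\Xt)$ and symmetrically $\LL(\LL(\Yt))\simeq\LL(\Yt)$, so that both $\LL(\Xt)$ and $\LL(\Yt)$ now satisfy the squaring identity $\WW\simeq\LL(\WW)$. The diagonal step applied to $(\Xt,\Yt)$ and $(\Yt,\Xt)$ then yields complementation in both directions between $\LL(\Xt)$ and $\LL(\Yt)$, and the classical Pe\l czy\'nski decomposition (combining the two squarings, the two-sided complementation, and $\LL$-sum absorption exactly as in the first swindle) delivers $\LL(\Xt)\simeq\LL(\Yt)$.

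The ``besides'' clause is immediate from the first part: if $\XX$ is $C$-complemented in $\LL(\Xt)$, taking $\LL$-sums shows $\LL(\XX)$ is $C$-complemented in $\LL(\LL(\Xt))\simeq\LL(\Xt)$, which is exactly the content of $\LL(\XX)\trianglelefteq\LL(\Xt)$ under the constant-family convention recorded at the end of the introduction. The main obstacle I anticipate is the careful recursive construction of $\sigma$ and the verification that slot-wise assembly yields bounded operators with bounded projection: injectivity of $\sigma$ genuinely relies on the ``for all finite $F$'' clause in the definition of $\trianglelefteq$, and the identification of the $\LL$-norms on $\NN$ and $\NN\times\NN$ through an arbitrary bijection is precisely what forces $\LL\in\{c_0,\ell_s\}$ in the hypothesis.
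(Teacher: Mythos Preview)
Your proposal is correct and follows essentially the same route as the paper. The paper encodes your diagonal embedding by introducing the repeated family $\Xt'=(\XX_n)_{(n,k)\in\Nt\times\NN}$, observes $\Xt'\trianglelefteq\Xt\trianglelefteq\Xt'$ and $\LL(\Xt')\simeq\LL(\LL(\Xt))$, and then invokes the Pe\l czy\'nski decomposition method where you spell out the Eilenberg swindle explicitly; the ``besides'' clause is handled identically in both.
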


\begin{proof}
Given a family $\Zt=(Z_n)_{n\in\Nt}$, we define $\Zt'=(Z_{n,k})_{(n,k)\in\Nt\times\NN}$ by $Z_{n,k}=Z_n$ for all $n\in\Nt$ and all $k\in\NN$. Since $\Xt\trianglelefteq\Xt$, $\Xt'\trianglelefteq\Xt\trianglelefteq\Xt'$. Hence, by the symmetry of $\LL$ we have
\[
\LL(\Xt')\trianglelefteq\LL(\Xt)\trianglelefteq\LL(\Yt)\trianglelefteq\LL(\Xt)\trianglelefteq\LL(\Xt').
\]
Symmetry also implies that $\LL(\Xt'')$ is naturally isomorphic to $\LL(\Xt')$. In turn, since $\LL(\LL)$ is lattice isomorphic to $\LL$, $\LL(\LL(\Xt))\simeq\LL(\Xt')$ and $\LL(\LL(\Xt'))\simeq\LL(\Xt'')$. Summing up, an application of Pe{\l}czy\'{n}ski decomposition method gives
\[
\LL(\Yt)\simeq\LL(\Xt)\simeq\LL(\Xt').
\]
Since it is clear that $\LL(\XX)\trianglelefteq\LL(\LL(\Xt))$, we are done.
\end{proof}

A quasi-Banach space $\XX$ is said to be a \emph{pseudo-dual} space if there is a Hausdorff vector topology on $\XX$, weaker than the norm-topology, relative to which the unit ball is relatively compact. Any dual space is a pseudo-dual by the Banach--Alaoglu Theorem.

\begin{lemma}\label{lem:elloosums}
Let $(Q_n)_{n=1}^\infty$ be a sequence of linear operators on a quasi-Banach space $\XX$. Assume that
\begin{itemize}
\item $Q_n\circ Q_k=Q_{\min\{n,k\}}$ for all $n$, $k\in\NN$,
\item $\lim Q_n=\Id_\XX$ in the strong topology of operators, and
\item $\XX \trianglelefteq \YY$ for some pseudo-dual space $\YY$.
\end{itemize}
Then, $\ell_\infty(\XX)\simeq \enpar{\oplus_{n=1}^\infty Q_n(\XX)}_{\ell_\infty}$.
\end{lemma}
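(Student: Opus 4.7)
Set $\XX_n:=Q_n(\XX)$ and $\WW:=\enpar{\oplus_{n=1}^\infty \XX_n}_{\ell_\infty}$. I would proceed in three stages: first show $\WW$ is complemented in $\ell_\infty(\XX)$, then show $\ell_\infty(\XX)$ is complemented in $\WW$ (the hard part, using the pseudo-dual hypothesis), and finally close by Lemma~\ref{lem:entriestosum} and Pe{\l}czy\'{n}ski's decomposition method.

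For the easy direction, the semigroup identity $Q_n\circ Q_k=Q_{\min\{n,k\}}$ immediately forces each $Q_n$ to be a projection and $(\XX_n)$ to be increasing, while strong convergence combined with the Banach--Steinhaus principle in $F$-spaces yields $C_0:=\sup_n\norm{Q_n}<\infty$. The diagonal map $T\colon \ell_\infty(\XX)\to\WW$, $T((x_n)):=(Q_n(x_n))$, is bounded and is a left inverse of the natural inclusion $\WW\hookrightarrow\ell_\infty(\XX)$ coming from $\XX_n\subseteq\XX$, so $\WW$ is complemented in $\ell_\infty(\XX)$.

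For the opposite direction I would embed $\ell_\infty(\XX)$ complementably into $\ell_\infty(\WW)$. The map
\[
J\colon \ell_\infty(\XX)\to \ell_\infty(\WW),\qquad J((x_j))_{j,n}:=Q_n(x_j),
\]
is bounded with norm $\le C_0$, and the quasi-triangle inequality together with $Q_n(x_j)\to x_j$ gives $\sup_n \norm{Q_n(x_j)}\gtrsim \norm{x_j}$, so $J$ is an isomorphic embedding. To construct a left inverse I invoke the pseudo-dual hypothesis: let $I_\XX\colon \XX\to\YY$ and $\Pi_\XX\colon\YY\to\XX$ be the witnesses of $\XX\trianglelefteq\YY$, let $\tau$ be the pseudo-dual topology on $\YY$, and fix a free ultrafilter $\mathcal U$ on $\NN$. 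For $w=(w_{j,n})_{j,n}\in\ell_\infty(\WW)$ each sequence $(I_\XX(w_{j,n}))_n$ lies in a scalar multiple of the $\tau$-compact set $\overline{B_\YY}^\tau$ and hence admits a unique $\tau$-ultralimit $L_j\in\YY$. Setting $\Pi(w)_j:=\Pi_\XX(L_j)$, linearity of $\tau$-ultralimits and the $\tau$-compactness of bounded subsets of $\YY$ make $\Pi\colon\ell_\infty(\WW)\to\ell_\infty(\XX)$ well-defined, linear and bounded. For $(x_j)\in\ell_\infty(\XX)$, norm convergence $Q_n(x_j)\to x_j$ implies $\tau$-convergence $I_\XX(Q_n(x_j))\to I_\XX(x_j)$, so $L_j=I_\XX(x_j)$ and $\Pi\circ J=\Id_{\ell_\infty(\XX)}$. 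Thus $\ell_\infty(\XX)$ embeds complementably in $\ell_\infty(\WW)$.

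To close, note that $\Xt:=(\XX_n)$ satisfies $\Xt\trianglelefteq\Xt$ since for $m\ge n$ the restriction $Q_n|_{\XX_m}$ is a $C_0$-bounded projection of $\XX_m$ onto $\XX_n$; Lemma~\ref{lem:entriestosum} with $\LL=\ell_\infty$ gives $\ell_\infty(\WW)=\ell_\infty(\ell_\infty(\Xt))\simeq\ell_\infty(\Xt)=\WW$. Combining this with the previous stage, $\ell_\infty(\XX)$ is complemented in $\WW$, while $\WW$ is complemented in $\ell_\infty(\XX)$ by the first stage. Since the trivial identity $\ell_\infty(\ell_\infty(\XX))\simeq\ell_\infty(\XX)$ provides the required $\ell_\infty$-absorption, Pe{\l}czy\'{n}ski's decomposition method concludes $\ell_\infty(\XX)\simeq\WW$. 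The principal obstacle is the $\tau$-ultralimit construction of $\Pi$: one must check that the $\tau$-limit of a norm-bounded sequence in $\YY$ stays in $\YY$ with controlled norm and that, despite $I_\XX$ and $\Pi_\XX$ being only norm-continuous, the whole composition produces an honest left inverse of $J$. This is precisely where the pseudo-dual hypothesis does all the work, and without it the natural candidate has no reason to land inside $\XX$ at all.
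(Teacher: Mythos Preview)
Your proof is correct and follows essentially the same approach as the paper: the ultrafilter/$\tau$-limit trick through the pseudo-dual space $\YY$, Lemma~\ref{lem:entriestosum}, and Pe{\l}czy\'{n}ski decomposition. The only organizational difference is that the paper applies the $\tau$-ultralimit once to prove $\XX\trianglelefteq\UU$ and then invokes the second clause of Lemma~\ref{lem:entriestosum} to lift this to $\ell_\infty(\XX)\trianglelefteq\UU$, whereas you run the same ultralimit argument coordinatewise to get $\ell_\infty(\XX)\trianglelefteq\ell_\infty(\WW)$ and then collapse $\ell_\infty(\WW)\simeq\WW$ via the first clause; the two routes are interchangeable.
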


\begin{proof}
Let $L\colon \XX \to \UU:=\enpar{\oplus_{n=1}^\infty Q_n(\XX)}_{\ell_\infty}$ be the linear bounded map given by
\[
L(f)=\enpar{Q_n(f)}_{n=1}^\infty, \quad f\in\XX.
\]
Let $J\colon \XX\to \YY$ and $P\colon \YY\to\XX$ be linear bounded maps such that $P\circ J=\Id_{\XX}$. Let $\tau$ be a topology on $\YY$ that witnesses $\YY$ is a pseudo-dual. Pick a nonprincipal ultrafilter $\Ut$ over $\NN$. Define
\[
T\colon \UU\to \YY, \quad (f_n)_{n=1}^\infty\mapsto \tau\mbox{--}\lim_{n,\Ut} J(f_n).
\]
It is routine to check that $P\circ T \circ L=\Id_{\XX}$. So, $\XX \trianglelefteq \UU$. Hence, by Lemma~\ref{lem:entriestosum}, $\VV:=\ell_\infty(\XX)\trianglelefteq \UU$ and $\ell_\infty(\VV)\simeq\VV$. Since, by construction, $\UU \trianglelefteq \VV$, applying the Pe{\l}czy\'{n}ski decomposition technique gives $\UU\simeq\VV$.
\end{proof}

\begin{theorem}
$L_\infty(L_s)\simeq L_\infty(\ell_s)$ and $\ell_\infty(L_s)\simeq Z_{s,\infty}\simeq B_{s,\infty}$ for all $s\in[1,\infty)$.
\end{theorem}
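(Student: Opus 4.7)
The plan is to prove the two claims $\ell_\infty(L_s)\simeq Z_{s,\infty}\simeq B_{s,\infty}$ and $L_\infty(L_s)\simeq L_\infty(\ell_s)$ separately. For the first, I would apply Lemma~\ref{lem:elloosums} twice. Taking $\XX=L_s$ and $Q_n$ the conditional expectation onto the dyadic $\sigma$-algebra with $2^n$ atoms, the composition rule $Q_nQ_m=Q_{\min\{n,m\}}$ is immediate, the strong convergence $Q_n\to\Id_{L_s}$ is the classical $L_s$-martingale convergence theorem ($1\le s<\infty$), and $L_s$ is complementably embedded in a pseudo-dual space (itself when $1<s<\infty$, and $M[0,1]=C[0,1]^*$ via the Lebesgue decomposition when $s=1$). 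Lemma~\ref{lem:elloosums} then gives $\ell_\infty(L_s)\simeq(\oplus_n\ell_s^{2^n})_{\ell_\infty}$; invoking Lemma~\ref{lem:entriestosum} with the families $(\ell_s^n)_n$ and $(\ell_s^{2^n})_n$ (each complementably dominating the other) identifies this space with $B_{s,\infty}=(\oplus_n\ell_s^n)_{\ell_\infty}$. Running the same scheme with $\XX=\ell_s$ and $Q_n$ the initial-coordinate projections yields $Z_{s,\infty}=\ell_\infty(\ell_s)\simeq(\oplus_n\ell_s^n)_{\ell_\infty}=B_{s,\infty}$.

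For the second claim the strategy is to show that both $L_\infty(L_s)$ and $L_\infty(\ell_s)$ are isomorphic to the common space $\mathbf{Y}_s:=(\oplus_nL_\infty(\ell_s^n))_{\ell_\infty}$ and then to reconcile the two descriptions via Lemma~\ref{lem:entriestosum}. Lemma~\ref{lem:elloosums} does not apply directly, because when $\XX\in\{L_s,\ell_s\}$ and $\tilde Q_n$ denotes the pointwise extension of $Q_n$ to $L_\infty(\XX)$, the sequence $(\tilde Q_n)$ no longer converges to the identity in the $L_\infty$-norm. I would nevertheless imitate the proof of that lemma, replacing strong-operator convergence with weak-$*$ convergence. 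The map
\[
L\colon L_\infty(\XX)\to\enpar{\oplus_nL_\infty(Q_n\XX)}_{\ell_\infty},\qquad f\mapsto(\tilde Q_nf)_n,
\]
is an isometric embedding, since the monotonicity $\|Q_ng\|_\XX\nearrow\|g\|_\XX$ (valid both for the dyadic martingale on $L_s$ and for the coordinate projections on $\ell_s$) lifts, through the essential supremum in the outer variable, to $\|\tilde Q_nf\|_{L_\infty(\XX)}\nearrow\|f\|_{L_\infty(\XX)}$.

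The crucial step is to produce a bounded left inverse of $L$. For this I would exploit the duality $L_\infty(\XX)=L_1(\XX^*)^*$, which holds whenever $\XX^*$ has the Radon--Nikodym property; this covers $\XX=L_s$ and $\XX=\ell_s$ for $1<s<\infty$, as well as $\XX=\ell_1=c_0^*$. On the separable, absolutely continuous predual the adjoints $\tilde Q_n^*$ remain martingale or coordinate-type projections and therefore converge strongly to the identity; dualising shows $\tilde Q_nf\to f$ in the weak-$*$ topology of $L_\infty(\XX)$. For a non-principal ultrafilter $\Ut$ on $\NN$, the bounded operator
\[
T\enpar{(g_n)_n}=w^{\ast}\text{-}\lim_{n,\Ut} g_n\in L_\infty(\XX),
\]
in which each $g_n$ is viewed inside $L_\infty(\XX)$ via the inclusion $L_\infty(Q_n\XX)\hookrightarrow L_\infty(\XX)$, then satisfies $T\circ L=\Id_{L_\infty(\XX)}$. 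The converse embedding $\mathbf{Y}_s\trianglelefteq L_\infty(\XX)$ follows from the partition isomorphism $L_\infty(\XX)\simeq\ell_\infty(L_\infty(\XX))$ together with the coordinate-wise complementability of each $L_\infty(Q_n\XX)$ in $L_\infty(\XX)$. Since both $L_\infty(\XX)$ and $\mathbf{Y}_s$ absorb $\ell_\infty$-sums of themselves (the former by the partition argument, the latter by Lemma~\ref{lem:entriestosum}), Pe{\l}czy\'{n}ski's decomposition delivers $L_\infty(\XX)\simeq(\oplus_nL_\infty(Q_n\XX))_{\ell_\infty}$. Specializing to $\XX=L_s$ (where $Q_n\XX=\ell_s^{2^n}$) and to $\XX=\ell_s$ (where $Q_n\XX=\ell_s^n$), and reconciling the resulting $\ell_\infty$-sums by Lemma~\ref{lem:entriestosum}, yields $L_\infty(L_s)\simeq L_\infty(\ell_s)$.

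The main obstacle is the weak-$*$ step, and within it the case $\XX=L_1$, where $L_1$ fails to be a dual space and the identity $L_\infty(L_1)=L_1(L_\infty)^*$ breaks down. I would circumvent it by transporting the ultrafilter argument to the complementable embedding $L_\infty(L_1)\hookrightarrow L_\infty(M[0,1])$ provided by the Lebesgue decomposition, and then pulling the limit back to $L_\infty(L_1)$ through the associated projection.
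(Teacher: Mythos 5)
Your treatment of the first claim, $\ell_\infty(L_s)\simeq Z_{s,\infty}\simeq B_{s,\infty}$, coincides with the paper's: Lemma~\ref{lem:elloosums} applied to the dyadic conditional expectations on $L_s$ and to the partial-sum projections on $\ell_s$, followed by Lemma~\ref{lem:entriestosum} to match the families $(\ell_s^{2^n})_n$ and $(\ell_s^{n})_n$. That part is correct.

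For the second claim you take a genuinely different, and much heavier, route, and it has a gap. The paper deduces $L_\infty(L_s)\simeq L_\infty(\ell_s)$ from the first claim in one step: applying $L_\infty(\cdot)$ to the isomorphism $\ell_\infty(L_s)\simeq \ell_\infty(\ell_s)$ gives $L_\infty(\ell_\infty(L_s))\simeq L_\infty(\ell_\infty(\ell_s))$, and since $L_\infty(\ell_\infty)$ is lattice isomorphic to $L_\infty$, one has $L_\infty(\ell_\infty(\XX))\simeq L_\infty(\XX)$ for every $\XX$; no duality, no weak$^*$ limits, no case distinction. Your route instead aims at $L_\infty(\XX)\simeq\enpar{\oplus_n L_\infty(Q_n\XX)}_{\ell_\infty}$ directly, producing the left inverse of $L$ as a weak$^*$ ultrafilter limit in $L_1(\YY)^*$ where $\YY^*=\XX$. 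This can be pushed through when $\XX$ is $L_s$ or $\ell_s$ with $1<s<\infty$, and for $\XX=\ell_1=c_0^*$ (in each case $\XX$ is a separable dual with the Radon--Nikodym property, so $L_1(\YY)^*$ is indeed the Bochner space $L_\infty(\XX)$), but it genuinely breaks for $\XX=L_1$. There $L_1$ is not a dual space, and the proposed ambient $L_\infty(M[0,1])$ does not repair the argument: $M[0,1]$ fails the Radon--Nikodym property, so $L_1(C[0,1])^*$ is a space of weak$^*$-measurable functions that strictly contains the Bochner space $L_\infty(M[0,1])$; nothing guarantees that the ultrafilter limit of $(\tilde Q_n f)_n$ is strongly measurable, and the Lebesgue-decomposition projection $M[0,1]\to L_1$ is not weak$^*$ continuous, so ``pulling the limit back through the associated projection'' is not justified. (A smaller slip: the duality you invoke should read $L_1(\YY)^*=L_\infty(\YY^*)$ for a predual $\YY$ of $\XX$ with $\YY^*=\XX$ enjoying the Radon--Nikodym property, not $L_1(\XX^*)^*=L_\infty(\XX)$.) The reduction via $L_\infty(\ell_\infty)\simeq L_\infty$ settles $s=1$ together with all other cases and should replace the entire second half of your argument.
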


\begin{proof}
With the only exception of $L_1$, $\ell_s$ and $L_s$ are dual spaces. In turn, by the Radon--Nikodym theorem, $L_1$ is a complemented subspace to the space of signed measures on $[0,1]$, which is a dual space. For each $n\in\NN$, let $Q_n\colon L_s\to L_s$ be the conditional expectation operator associated with the $n$th step of the dyadic filtration. By Lemma~\ref{lem:elloosums},
\[
\ell_\infty(L_s) \simeq \enpar{\oplus_{n=1}^\infty Q_n(L_s)}_{\ell_\infty}\simeq \enpar{\oplus_{n=1}^\infty \ell_s^{2^n}}_{\ell_\infty}.
\]
Similarly, applying Lemma~\ref{lem:elloosums} to the canonical partial sum projections on $\ell_s$ yields
\[
\ell_\infty(\ell_s)\simeq \enpar{\oplus_{n=1}^\infty \ell_s^{n}}_{\ell_\infty}.
\]
By Lemma~\ref{lem:entriestosum}, $\ell_\infty(L_s)\simeq Z_{s,\infty} \simeq B_{s,\infty}$. Consequently,
$L_\infty(\ell_\infty(L_s))\simeq L_\infty(Z_{s,\infty})$. Since $L_\infty(\ell_\infty)$ is lattice isomorphic to $L_\infty$, we are done.
\end{proof}

Considering the knowledge we have gathered, in order to classify by isomorphism the Banach spaces in
\begin{equation}\label{eq:cup.Rrs}
\bigcup_{1\le r,s\le\infty} \Rt(r,s),
\end{equation}
we have to classify the spaces in each of the following families:
\begin{itemize}
\item $\Et_1=\{\ell_1,L_1\}$,
\item $\Et_{2,s}=\{L_s(L_2),Z_{2,s}, B_{2,s}\}$ for $s\in[1,\infty)\setminus\{2\}$,
\item $\Et_{r,s}=\Rt_{r,s}$ for $r\in[1,\infty)\setminus\{2\}$ and $s\in[1,\infty)$ with $r\not=s$,
\item $\Et_{\infty,s}=\{L_s(L_\infty), Z_{\infty,s}, B_{\infty,s}\}$ for $s\in[1,\infty)$,
\item $\Et_{r,\infty}=\enbrace{ L_\infty(L_r), B_{r,\infty}}$ for $r\in[1,\infty)$.
\end{itemize}

It is well-known that the two spaces in $\Et_1$ are not isomorphic. Indeed, we could use Theorem~\ref{thm:AMS} and Theorem~\ref{thm:PelSti} to tell apart $\ell_1$ from $L_1$. As $\Et_{\infty,s}$, $1\le s<\infty$, is concerned, we note that $B_{\infty,s}$ is separable, and the non-separable spaces $L_s(L_\infty)$ and $\ell_s(\ell_\infty)$ are not isomorphic by Example~\ref{ex:Kania}. So, the three spaces in $\Et_{\infty,s}$, $1\le s<\infty$, are mutually non-isomorphic. We will use the results achieved in Section~\ref{sect:lpembeds} to address the classification by isomorphism of the spaces in each family $\Et_{r,s}$ for $r$, $s\in[1,\infty)$, $r\not=s$.

\begin{theorem}\label{thm:IsoClas}
For $r$, $s\in(1,\infty)$, $r\not=s$, the Banach spaces in $\Et_{r,s}$ are mutually non-isomorphic. Concerning the families $\Et_{r,s}$ with $1=\min\{r,s\}<\max\{r,s\}<\infty$, we have the following.
\begin{itemize}
\item Let $1<s<\infty$. If two different spaces in $\Et_{1,s}$ are isomorphic, these spaces are $L_s(L_1)$ and $\ell_s(L_1)$.
\item Let $1<r<\infty$. If two different spaces in $\Et_{r,1}$ are isomorphic, these spaces are $L_1(L_r)$ and $L_1(\ell_r)$.
\end{itemize}
\end{theorem}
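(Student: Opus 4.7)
The plan is to use the invariant $\LP(\XX)$, whose value on every space in each family $\Et_{r,s}$ was computed in Section~\ref{sect:lpembeds} (Theorems~\ref{thm:LPBesov}, \ref{thm:lplq}, \ref{thm:lpLq}, \ref{thm:lpEmbedLsLr}, \ref{thm:Lplq}), and compare these sets pairwise. Most distinctions fall out immediately. First I would peel off $B_{r,s}$: its $\LP$-set is $\{s\}$, while every other member of $\Rt_{r,s}$ has $r$ in its $\LP$-set (each of them contains an isomorphic copy of $\ell_r$), so $B_{r,s}$ is isolated whenever $r\neq s$. Next I would peel off $Z_{r,s}$: $\LP(Z_{r,s})=\{r,s\}$ is strictly smaller than the $\LP$-set of each of the three ``continuous'' spaces $L_s(L_r)$, $\ell_s(L_r)$, $L_s(\ell_r)$, which under our hypotheses always picks up either a non-trivial interval or the extra point $2$.

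The bulk of the work is then to distinguish the triple $L_s(L_r)$, $\ell_s(L_r)$, $L_s(\ell_r)$, and I would split along the case regimes inherited from Theorems~\ref{thm:lpEmbedLsLr}, \ref{thm:lpLq}, \ref{thm:Lplq}. In most regimes the three $\LP$-sets already differ on inspection. The awkward regimes are those in which two or more sets coincide, the prototype being the $\{2,r,s\}$-regime when $2\le r\le s$ and its symmetric counterparts. When $s=2$ this is exactly the content of Theorem~\ref{thm:mainRF} and the corollary following it, which separates $L_2(L_r)$ from $\ell_2(L_r)$, while Dilworth's Example~\ref{ex:Dilworth} separates $L_2(\ell_r)$ from $\ell_2(\ell_r)=Z_{r,2}$. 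For analogous coincidences with $s\ne 2$, I would appeal to Theorem~\ref{thm:CM}: assuming $L_s(L_r)\sqsubseteq\ell_s(\XX)$ with $\XX\in\{L_r,\ell_r\}$ forces either $L_s(L_r)\sqsubseteq\XX$ (refuted since $\ell_s\sqsubseteq L_s(L_r)$ while $s$ is not in the $\LP$-set of $\XX$ in the relevant regime) or $s\in\LP(L_s(L_r))$, the latter to be combined with Lemma~\ref{lem:RaynaudQB} applied to the reverse containment to force a contradiction.

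For the endpoint families $\Et_{1,s}$ and $\Et_{r,1}$, a pairwise examination of $\LP$-sets shows that the only pairs not separated by this invariant are precisely $\{L_s(L_1),\ell_s(L_1)\}$ and $\{L_1(L_r),L_1(\ell_r)\}$ in the relevant ranges; every other pair is distinguished by $\LP$ on the nose. Neither Theorem~\ref{thm:mainRF} (which needs an exponent strictly between $2$ and $\infty$) nor the duality arguments available to us apply at the endpoint $p=1$, which is why these two exceptional pairs must be left open in the statement. The main obstacle I anticipate throughout is the $\{2,r,s\}$-regime with $s\ne 2$: every easy invariant coincides there, and one must carefully orchestrate Theorem~\ref{thm:CM}, Lemma~\ref{lem:RaynaudQB}, and the precise $\LP$-values of the base spaces to extract a non-trivial containment that can then be refuted.
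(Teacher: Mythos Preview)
Your overall strategy---compare the sets $\LP(\XX)$ across each $\Et_{r,s}$---is exactly the paper's, and your isolation of $B_{r,s}$ and $Z_{r,s}$ as well as your analysis of the endpoint families is correct. The gap is in the ``awkward'' regimes where two of the three spaces $L_s(L_r)$, $\ell_s(L_r)$, $L_s(\ell_r)$ share the same $\LP$-set. The paper's device here is one you never invoke: since $r,s\in(1,\infty)$, pass to duals and compare $\LP(\XX^*)$ as a second isomorphism invariant. For instance, when $1<r<s\le 2$ both $L_s(L_r)$ and $\ell_s(L_r)$ have $\LP=[r,2]$, but their duals $L_{s'}(L_{r'})$ and $\ell_{s'}(L_{r'})$ lie in the mirror regime $2\le s'<r'$, where the $\LP$-sets are $\{2\}\cup[s',r']$ and $\{2,r',s'\}$. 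Every coincidence for $r,s\in(1,\infty)$ dissolves under this dual comparison; the sole pair surviving both $\LP$ and $\LP\circ{}^*$ is $L_2(\ell_1)$ versus $\ell_2(\ell_1)$, which the paper dispatches with Example~\ref{ex:Diestel} or Example~\ref{ex:Dilworth}.

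Your proposed substitute via Theorem~\ref{thm:CM} does not close. With $\YY=L_s(L_r)$ and $\YY\sqsubseteq\ell_s(\XX)$, the dichotomy reads ``$\YY\sqsubseteq\XX$ or $s\in\LP(\YY)$''; but the second alternative is automatic since $\ell_s\sqsubseteq L_s\sqsubseteq L_s(L_r)$, so nothing is learned. Likewise, feeding the ``reverse containment'' into Lemma~\ref{lem:RaynaudQB} yields ``$s\in\LP(\YY)$ or $\YY\sqsubseteq L_t(\XX)$ for all $t<s$'', and again the first alternative is free. These tools only reproduce the $\LP$-inclusions already tabulated in Section~\ref{sect:lpembeds}; they cannot separate two spaces whose $\LP$-sets already agree. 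Duality is the missing move.
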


\begin{proof}
Combining Theorem~\ref{thm:LPBesov}, Theorem~\ref{thm:lplq}, Theorem~\ref{thm:lpLq}, Theorem~\ref{thm:lpEmbedLsLr} and Theorem~\ref{thm:Lplq}, and using that if $\XX\simeq\YY$ then $\LP(\XX)\simeq\LP(\YY)$ and $\LP(\XX^*)\simeq\LP(\YY^*)$, tells apart all the required spaces but $L_2(\ell_1)$ from $\ell_2(\ell_1)$ (see Tables~\ref{table:1}, \ref{table:2}, \ref{table:3}, \ref{table:4} and \ref{table:5}). To complete the proof, use Example~\ref{ex:Diestel} or Example~\ref{ex:Dilworth}.
\end{proof}

\begin{table}
\begin{tabular}{|c||c | c | c|}
\hline
\multicolumn{4}{|c|}{$\Et_{2,s}$ for $1\le s<2$} \\
\hline
$ \XX $&$ L_s(L_2) $&$ Z_{2,s} $&$ B_{2,s} $\\
\hline
$ \LP(\XX) $&$ [s,2] $&$ \{2,s\} $&$ \{s\} $\\
\hline
\hline
\multicolumn{4}{|c|}{$\Et_{2,s}$ for $2<s<\infty$} \\
\hline
$ \XX $&$ L_s(L_2) $&$ Z_{2,s} $&$ B_{2,s} $\\
\hline
$ \LP(\XX) $&$ \{2,s\} $&$ \{2,s\} $&$\{s\} $\\
\hline
\end{tabular}
\medskip
\caption{}
\label{table:1}
\end{table}

\begin{table}
\begin{tabular}{|c||c|c|c|c|c|}
\hline
\multicolumn{6}{|c|}{$\Et_{r,s}$ for $1\le r < s\le 2$} \\
\hline
$ \XX $&$ L_s(L_r) $&$ \ell_s(L_r) $&$ L_s(\ell_r) $&$ Z_{r,s} $&$ B_{r,s} $\\
\hline
$ \LP(\XX) $&$ [r,2] $&$ [r,2] $&$ \{r\} \cup[s,2] $&$ \{r,s\} $&$ \{s\} $\\
\hline
\hline
\multicolumn{6}{|c|}{$\Et_{r,s}$ for $2\le s<r<\infty$} \\
\hline
$ \XX $&$ L_s(L_r) $&$ \ell_s(L_r) $&$ L_s(\ell_r) $&$ Z_{r,s} $&$ B_{r,s} $\\
\hline
$ \LP(\XX) $&$ \{2\} \cup [s,r] $&$ \{2,r,s\} $&$ \{2,r,s\} $&$ \{r,s\} $&$ \{s\} $\\
\hline
\end{tabular}
\medskip
\caption{}
\label{table:2}
\end{table}

\begin{table}
\begin{tabular}{|c||c|c|c|c|c|}
\hline
\multicolumn{6}{|c|}{$\Et_{r,s}$ for $1\le r < 2\le s<\infty$} \\
\hline
$ \XX $&$ L_s(L_r) $&$ \ell_s(L_r) $&$ L_s(\ell_r) $&$ Z_{r,s} $&$ B_{r,s} $\\
\hline
$ \LP(\XX) $&$ [r,2] \cup\{s\} $&$ [r,2] \cup\{s\}$&$ \{r,2,s\} $&$ \{r,s\} $&$ \{s\} $\\
\hline
\hline
\multicolumn{6}{|c|}{$\Et_{r,s}$ for $1\le s\le 2<r<\infty$} \\
\hline
$ \XX $&$ L_s(L_r) $&$ \ell_s(L_r) $&$ L_s(\ell_r) $&$ Z_{r,s} $&$ B_{r,s} $\\
\hline
$ \LP(\XX) $&$ [s,r] $&$ \{2,r,s\} $&$ \{2,r,s\} $&$ \{r,s\} $&$ \{s\} $\\
\hline
\end{tabular}
\medskip
\caption{}
\label{table:3}
\end{table}

\begin{table}
\begin{tabular}{|c||c|c|c|c|c|}
\hline
\multicolumn{6}{|c|}{$\Et_{r,s}$ for $1\le s < r < 2$} \\
\hline
$ \XX $&$ L_s(L_r) $&$ \ell_s(L_r) $&$ L_s(\ell_r) $&$ Z_{r,s} $&$ B_{r,s} $\\
\hline
$ \LP(\XX) $&$ [s,2] $&$ \{s\} \cup [r,2] $&$ [s,2] $&$ \{r,s\} $&$ \{s\} $\\
\hline
\hline
\multicolumn{6}{|c|}{$\Et_{r,s}$ for $2 < r<s<\infty$} \\
\hline
$ \XX $&$ L_s(L_r) $&$ \ell_s(L_r) $&$ L_s(\ell_r) $&$ Z_{r,s} $&$ B_{r,s} $\\
\hline
$ \LP(\XX) $&$ \{2\} \cup [r,s] $&$ \{2,r,s\} $&$ \{2,r,s\} $&$ \{r,s\} $&$ \{s\} $\\
\hline
\end{tabular}
\medskip
\caption{}
\label{table:4}
\end{table}

\begin{table}
\begin{tabular}{|c||c|c|c|c|c|}
\hline
\multicolumn{6}{|c|}{$\Et_{r,s}$ for $1\le s \le 2<r$} \\
\hline
$ \XX $&$ L_s(L_r) $&$ \ell_s(L_r) $&$ L_s(\ell_r) $&$ Z_{r,s} $&$ B_{r,s} $\\
\hline
$ \LP(\XX) $&$ [s,r] $&$ \{2,r,s\} $&$ [s,r] $&$ \{r,s\} $&$ \{s\} $\\
\hline
\hline
\multicolumn{6}{|c|}{$\Et_{r,s}$ for $r < 2 \le s<\infty$} \\
\hline
$ \XX $&$ L_s(L_r) $&$ \ell_s(L_r) $&$ L_s(\ell_r) $&$ Z_{r,s} $&$ B_{r,s} $\\
\hline
$ \LP(\XX) $&$ \{s\} \cup [r,2] $&$ \{s\} \cup [r,2] $&$ \{2,r,s\} $&$ \{r,s\} $&$ \{s\} $\\
\hline
\end{tabular}
\medskip
\caption{}
\label{table:5}
\end{table}

We close the paper writting down the problems that Theorem~\ref{thm:IsoClas} leaves open in order to totally classify, up to isomorphism, the spaces listed in \eqref{eq:cup.Rrs}.

\begin{question}
Let $1<s<\infty$. Are $L_s(L_1)$ and $\ell_s(L_1)$ isomorphic?
\end{question}

\begin{question}
Let $1<r<\infty$, $r\not=2$. Are $L_1(L_r)$ and $L_1(\ell_r)$ isomorphic?
\end{question}

\begin{question}\label{qt:CM}
Let $r\in[1,\infty)$. Are the spaces $L_\infty(L_r)$ and $B_{r,\infty}$ isomorphic?
\end{question}

Question~\ref{qt:CM} is framed within the general problem of studying the class of Banach spaces $\XX$ for which $L_\infty(\XX)$ and $\ell_\infty(\XX)$ are isomorphic (see \cite{CembranosMendoza1997}*{Problem 7.6} or \cite{Rodriguez2017}*{Problem 5.7}).
% ------------------------------------------------------------------------
\section*{Acknowledgements}
% ------------------------------------------------------------------------
The authors are indebted to S.\@ Dilworth and J.\@ Rodr\'{\i}guez for drawing the papers \cites{Dilworth1990,Diestel1977} to the their attention.
% ------------------------------------------------------------------------
\bibliography{Biblio}
\bibliographystyle{plain}
% ------------------------------------------------------------------------
\end{document}